\documentclass[a4, reqno]{amsart}
\usepackage{amsmath}
\usepackage{amsthm}
\usepackage{amssymb}
\usepackage{amscd}

\usepackage{hyperref}

\usepackage[margin=3.75cm]{geometry}

\theoremstyle{plain}
\newtheorem{theorem}{Theorem}[section]
\newtheorem{proposition}[theorem]{Proposition}

\newtheorem{lemma}[theorem]{Lemma}

\theoremstyle{definition}
\newtheorem{definition}[theorem]{Definition}

\newtheorem{assumption}[theorem]{Assumption}

\newtheorem{conventions}[theorem]{Conventions}

\theoremstyle{remark}
\newtheorem{remark}[theorem]{Remark}
\newtheorem{example}[theorem]{Example}

\numberwithin{equation}{section}

\newcommand{\bv}{\mathbf{v}}

\newcommand{\Moduli}[4]{{\mathcal{M}_{#3}(#1, #2, #4)}}
\newcommand{\FormalHilb}[2]{{\mathcal{H}^{#1}_{#2}}}

\title[Moduli of $1$-dimensional sheaves on log surfaces and Hilbert schemes]{Moduli spaces of one dimensional sheaves on log surfaces and Hilbert schemes}

\author{Nobuyoshi Takahashi}
\address{
Department of Mathematics, 
Graduate School of Advanced Science and Engineering, 
Hiroshima University, 
1-3-1 Kagamiyama, Higashi-Hiroshima, 
739-8526 JAPAN}
\email{tkhsnbys@hiroshima-u.ac.jp}

\subjclass[2020]{Primary 14B05; Secondary 14H40; 14H60; 14J42}


\keywords{Log surface; Moduli space; Compactified Jacobian; Hilbert scheme; Symplectic singularity}

\begin{document}

\maketitle

\begin{abstract}
Let $X$ be a smooth projective rational surface, 
$D\subset X$ an effective anticanonical curve, 
$\beta$ a curve class on $X$ 
and $\mathfrak{d}=\sum w_iP_i$ an effective divisor on $D_{\mathrm{sm}}$. 
We consider the moduli space $\Moduli{X}{D}{\beta}{\mathfrak{d}}$ of sheaves on $X$ 
which are direct images of rank-$1$ torsion-free sheaves on integral curves $C$ in $\beta$ 
such that $C|_D=\mathfrak{d}$, 
and show that each point of $\Moduli{X}{D}{\beta}{\mathfrak{d}}$ 
is smooth over a point 
in the product of the Hilbert schemes of surface singularities of types $A_{w_i-1}$. 
Hence, $\Moduli{X}{D}{\beta}{\mathfrak{d}}$ 
has symplectic singularities and admits a unique symplectic resolution. 
\end{abstract}

\section{Introduction}

For a proper algebraic variety $X$, 
the moduli spaces of coherent sheaves on $X$ with one dimensional supports 
have applications in a number of topics, 
such as enumeration of curves on $X$. 
In the case where $X$ is a K3 surface, 
the moduli spaces of simple sheaves are symplectic manifolds (\cite{Mukai1984}), 
and geometric properties of their compactifications 
have been studied in many works 
(e.\,g. \cite{OG1999}, \cite{KLS2006}, \cite{AS2018}, \cite{PR2023}, \cite{CKLR2024}). 
One important question asked there is whether they 
have symplectic singularities and admit symplectic resolutions. 

Let $(X, D)$ be a logarithmic pair consisting of 
a smooth surface $X$ over $\mathbb{C}$ 
and an effective divisor $D$ on $X$, 
$\beta$ a curve class on $X$ 
and $\mathfrak{d}$ an effective divisor on $D_{\mathrm{sm}}$.  
Then we consider the moduli space $\Moduli{X}{D}{\beta}{\mathfrak{d}}$ 
of sheaves on $X$ which are direct images of rank-$1$ torsion-free sheaves 
on integral curves $C$ in $\beta$ satisfying $C|_D=\mathfrak{d}$. 
This is a partial compactification of a space considered in \cite{BiswasGomez2020}, 
and such moduli spaces would be of interest also in the context of 
Hitchin pairs (\cite{Nitsure1991}, \cite{Bottacin1995a}, \cite{Markman1994}). 

The main question we ask here is whether 
$\Moduli{X}{D}{\beta}{\mathfrak{d}}$  has symplectic singularities 
and admits symplectic resolutions. 
We are especially interested in the case 
where $X$ is a smooth projective rational surface and $D$ is an anticanonical curve. 
In this case, the pair can be thought of as a logarithmic analogue of a K3 surface. 

Our moduli space can be regarded as the relative compactified Jacobian 
of a family $\mathcal{C}/\Lambda$ of curves. 
Hilbert schemes and Abel maps have been major tools 
in the study of the geometry of compactified Jacobians 
(see e.\,g. \cite{DSouza1979}, \cite{AK1980}, \cite{MRV2017} and \cite{Shende2012}). 
In this paper, we use twisted Abel maps from \cite{MRV2017} 
to give an isomorphism of the completions of $\Moduli{X}{D}{\beta}{\mathfrak{d}}$  
and the relative Hilbert scheme 
$\mathrm{Hilb}^g(\mathcal{C}/\Lambda)$. 
Since the latter can be reduced to the case of a subscheme $Z$ supported at a point, 
we consider in this introduction the relative formal Hilbert scheme 
$\FormalHilb{\hat{\mathcal{C}}/\hat{\Lambda}}{Z}$ 
where $\hat{\mathcal{C}}$ and $\hat{\Lambda}$ are 
completions at one point. 

What is special about our families is that 
the curves meet the boundary curve with constant multiplicity $w$. 
Note that, if we take a general such family 
with a fixed singular curve as the central fiber, 
its total space is formally isomorphic 
to the product of the surface singularity $S$ of type $A_{w-1}$ 
and a smooth space (Example \ref{ex_codim2}). 
In this paper, we observe an intriguing phenomenon 
that $\FormalHilb{\hat{\mathcal{C}}/\hat{\Lambda}}{Z}$ can be related, 
by an elementary ``trick,'' 
to the formal Hilbert scheme $\FormalHilb{S}{\tilde{Z}}$ 
for a subscheme $\tilde{Z}\subset S$. 
Under a certain nondegeneracy condition (Definition \ref{def_Phi_nondegeneracy}), 
we see that 
$\FormalHilb{\hat{\mathcal{C}}/\hat{\Lambda}}{Z}$ is smooth 
over $\FormalHilb{S}{\tilde{Z}}$
(Theorem \ref{thm_corr}). 
We can somewhat relax the assumption 
by considering coordinate changes of the ambient space 
depending on the parameter of the curves 
(Proposition \ref{prop_coord_change}). 

The nondegeneracy condition holds 
in the case of a log Calabi-Yau pair $(X, D)$ with $X$ rational, 
and we have the following. 

\begin{theorem}[=Theorem \ref{thm_main}]
Let $X$ be a smooth projective rational surface, 
$D\subset X$ an anticanonical curve, 
$\beta\in H^2(X, \mathbb{Z})$ a curve class of arithmetic genus $g$ 
and $\mathfrak{d}=\sum_{i=1}^k w_iP_i$ an effective divisor on $D_{\mathrm{sm}}$ 
such that $\beta|_D\sim \mathfrak{d}$. 

Then each point of $\Moduli{X}{D}{\beta}{\mathfrak{d}}$ 
is formally isomorphic to a point 
of $\prod_{i=1}^k \mathrm{Hilb}^{d_i}(S_{w_i-1})\times \mathbb{A}^{2d}$, 
where $S_n$ is a surface singularity of type $A_n$, 
$d_i\leq \delta(C, P_i)$ and $d=g-\sum_{i=1}^k d_i$. 
\end{theorem}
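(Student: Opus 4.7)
The plan is to follow the chain of formal isomorphisms already assembled in the paper: moduli space to relative Hilbert scheme, to local factors at each support point, and finally to Hilbert schemes of type-$A$ surface singularities. Concretely, I would first invoke the twisted Abel map of \cite{MRV2017} to produce, for each sheaf $\mathcal{F}\in \Moduli{X}{D}{\beta}{\mathfrak{d}}$, a formal isomorphism between the completion of the moduli space at $\mathcal{F}$ and the completion of $\mathrm{Hilb}^g(\mathcal{C}/\Lambda)$ at the length-$g$ subscheme $Z$ of the integral curve $C_0\in\Lambda$ corresponding to $\mathcal{F}$. Writing $Z=\bigsqcup_j Z_j$ according to supports at distinct closed points $q_j\in C_0$, this completion splits as a fibered product over $\hat{\Lambda}$ of the local factors $\FormalHilb{\hat{\mathcal{C}}/\hat{\Lambda}}{Z_j}$. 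Because $\mathfrak{d}\subset D_{\mathrm{sm}}$, any support point lying on $D$ must coincide with one of the $P_i$, with the cumulative length there bounded by $\delta(C_0,P_i)$; at a point $q_j\notin D$, the total space $\mathcal{C}$ is formally smooth over $\Lambda$, and the corresponding factor $\FormalHilb{\hat{\mathcal{C}}/\hat{\Lambda}}{Z_j}$ is formally smooth over $\hat{\Lambda}$ of the expected dimension.

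Next, at each $q_j$ equal to some $P_i$, I would apply Theorem \ref{thm_corr}, combined with Proposition \ref{prop_coord_change} to accommodate the parameter-dependent coordinate changes on the ambient surface, to obtain that $\FormalHilb{\hat{\mathcal{C}}/\hat{\Lambda}}{Z_j}$ is formally smooth over the formal Hilbert scheme $\FormalHilb{S_{w_i-1}}{\tilde{Z}_j}$ at a suitable subscheme $\tilde{Z}_j\subset S_{w_i-1}$ of the $A_{w_i-1}$ singularity. Grouping these factors by $P_i$, their product becomes formally smooth over $\mathrm{Hilb}^{d_i}(S_{w_i-1})$ with $d_i=\sum_{q_j=P_i}\mathrm{length}(Z_j)\leq\delta(C_0,P_i)$.

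The main obstacle is verifying the $\Phi$-nondegeneracy condition of Definition \ref{def_Phi_nondegeneracy} at each boundary point, and this is where the hypotheses $X$ rational and $D\in|{-K_X}|$ enter decisively: they provide enough first-order variations of the local equation of $C_0$ at $P_i$ coming from the linear system $|\beta|$, ultimately via vanishing of obstructions controlled by the anticanonical nature of $D$ on a rational surface. Once this is established, assembling all the smoothness statements yields a formally smooth morphism from the completion of $\Moduli{X}{D}{\beta}{\mathfrak{d}}$ at $\mathcal{F}$ to $\prod_{i=1}^k \mathrm{Hilb}^{d_i}(S_{w_i-1})$, and comparing against the known dimension $2g$ of the moduli space forces the smooth fibers to have dimension $2d=2(g-\sum_i d_i)$. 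This produces the desired formal isomorphism with $\prod_{i=1}^k \mathrm{Hilb}^{d_i}(S_{w_i-1})\times \mathbb{A}^{2d}$.
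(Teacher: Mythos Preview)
Your overall strategy matches the paper's: reduce to the relative Hilbert scheme via a twisted Abel map (Proposition~\ref{prop_twisted_abel_map}), split by support points, and invoke Theorem~\ref{thm_corr}. However, there is a genuine gap in your treatment of support points $q_j\notin D$. You assert that the total space $\mathcal{C}$ is formally smooth \emph{over} $\Lambda$ there, but this is false whenever $q_j$ is a singular point of $C_0$ --- and such points do arise, since the subscheme $Z$ produced by Proposition~\ref{prop_twisted_abel_map} is supported on the non-normal locus of $C_0$, which need not lie on $D$. The paper does not argue smoothness of the total space at all; instead it feeds the off-boundary points into Theorem~\ref{thm_corr} on equal footing via the maps $\Delta_{\mathcal{E}^{Q_j},Z_{Q_j}}$, whose targets $\FormalHilb{S'}{\tilde{I}'_j}$ are completions of $\mathrm{Hilb}^{d'_j}(\hat{\mathbb{A}}^2)$ and hence already smooth.

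A related issue: Theorem~\ref{thm_corr} requires surjectivity of the \emph{combined} map $(\Phi_{\mathcal{E}^{P_1},Z_{P_1}},\dots,\Delta_{\mathcal{E}^{Q_l},Z_{Q_l}})$ from $T_{[C]}\Lambda$ to the product $\prod\mathcal{O}/I$, not surjectivity factor by factor. Checking each boundary point separately and then ``assembling'' does not give smoothness of the fibred product over $\Lambda$ onto the product of targets. The paper establishes the combined surjectivity in one stroke: Lemma~\ref{lem_main_localdesc} identifies the combined map with the restriction $H^0(\mathcal{O}_X(C-D))\to H^0(\mathcal{O}_C(C-D)|_Z)$, and Lemma~\ref{lem_main_surjectivity} shows this is surjective using precisely the condition $\mathrm{Hom}_{\mathcal{O}_C}(I_Z,\mathcal{O}_C)\cong\mathbb{C}$ guaranteed by the choice of $Z$ in Proposition~\ref{prop_twisted_abel_map}. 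This is where the rational and anticanonical hypotheses enter, as you correctly anticipate. Finally, Proposition~\ref{prop_coord_change} is not needed here; the nondegeneracy condition~($\ast$) of Definition~\ref{def_Phi_nondegeneracy} holds directly.
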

Since $\mathrm{Hilb}^d(S_n)$ has symplectic singularities 
and admits a unique symplectic resolution by \cite{CY2023}, 
the same holds for $\Moduli{X}{D}{\beta}{\mathfrak{d}}$. 

The moduli space $\Moduli{X}{D}{\beta}{\mathfrak{d}}$ 
is a partial compactification of a symplectic leaf in the moduli space 
considered in \cite{BiswasGomez2020}, 
and it adds a codimension one set corresponding to sheaves 
with singular support curves. 
To compactify $\Moduli{X}{D}{\beta}{\mathfrak{d}}$, 
it is necessary to include sheaves whose support curves have 
components in common with $D$. 
One approach is to consider sheaves on degenerations of $X$ 
(\cite{MPT2010}, \cite{LW2015}), 
and it seems reasonable to expect that the compactified moduli space 
has a log-symplectic structure, as in \cite{Das2022}.

\smallbreak
This paper is organized as follows. 
In Section 2, we introduce moduli spaces of one dimensional sheaves 
with conditions on the intersections with the boundary. 
In Section 3, we explain how twisted Abel maps give 
local isomorphisms between these spaces and relative Hilbert schemes 
of families of curves. 
In Section 4, we define a morphism from such a relative Hilbert scheme 
to a Hilbert scheme of a surface singularity of type $A_n$, 
and show that it is a local isomorphism under a certain nondegeneracy condition. 
Finally, in Section 5, 
we prove the main theorem. 

\begin{conventions}
When a point of a moduli space represents a geometric or an algebraic object, 
say $\mathcal{F}$, 
we will sometimes write $[\mathcal{F}]$ for this point. 

If $\Lambda$ is the (formal) spectrum of a local $\mathbb{C}$-algebra 
with residue field $\mathbb{C}$, 
its closed point is denoted by $0$. 
Families of curves over $\Lambda$ and functions on a family over $\Lambda$ 
are symbolically written in the form 
$\mathcal{C}=\{C_\lambda\}_{\lambda\in\Lambda}$ and 
$\mathcal{E}=\{E_\lambda(x, y)\}_{\lambda\in\Lambda}$, 
and $C_0$ and $E_0(x, y)$ are their fibers over $0$. 

If $Y$ is a closed (formal) subscheme of $X$, 
its ideal sheaf is denoted by $I_{Y\subset X}$, 
or $I_Y$ when the ambient space is obvious from the context. 
\end{conventions}

\section{moduli spaces of one dimensional sheaves}

The moduli spaces of simple sheaves on a K3 surface 
are equipped with natural symplectic structures by \cite{Mukai1984}. 
In the log Calabi-Yau setting, by choosing an anticanonical form, 
we obtain a Poisson structure and its symplectic leaves as follows 
(\cite{Tyurin1988}, \cite{Bottacin1995a}, \cite{Bottacin1995b} and \cite{BiswasGomez2020}). 

Let $X$ be a nonsingular projective surface, 
$D\subset X$ an effective divisor, 
$\beta\in H^2(X)$ a class of a curve with $w:=\beta.D>0$. 

\begin{definition}
We define the following parameter spaces: 
\begin{eqnarray*}
\Lambda_\beta & := & \{C: \hbox{$C$ is an integral curve in $X$, 
$C\not\subseteq D$, $[C]=\beta$}\}, \\
\mathcal{M}_\beta & := & \{\hbox{$\mathcal{F}$}: 
\hbox{$\mathcal{F}$ is a coherent sheaf on $X$, 
torsion-free of rank $1$ on some $C\in \Lambda_\beta$}\}, \\
\mathcal{M}^\circ_\beta & := &
\{
\hbox{$\mathcal{F}$}: 
\hbox{$\mathcal{F}$ is a coherent sheaf on $X$, 
invertible on some smooth $C\in \Lambda_\beta$}
\}. 
\end{eqnarray*}
To be precise, 
$\mathcal{M}_\beta$ represents the sheafification 
of the functor on $\mathbb{C}$-schemes 
\[
\ S\mapsto 
\left.\left\{
\mathcal{F} \left\vert
\begin{array}{l}
\hbox{$\mathcal{F}$ is a coherent sheaf on $X\times S$ flat over $S$ such that,} \\
\hbox{for any geometric point $t$ of $S$, 
$\mathcal{F}_t$ is a torsion-free sheaf} \\
\hbox{of rank $1$ on some $C_t$ belonging to $\Lambda_\beta$}
\end{array}
\right.\right\}\right/
\cong, 
\]
and so on. 

There is a natural morphism 
\[
\mathcal{M}_\beta\to \Lambda_\beta;\ 
[\mathcal{F}]\mapsto [\mathrm{Supp}(\mathcal{F})] 
\]
given by the operation of taking the $0$-th Fitting ideal (\cite[\S2]{BiswasGomez2020}). 
We also have a morphism 
\[
\Lambda_\beta\to \mathrm{Sym}^w(D);\ [C]\mapsto [C.D], 
\]
where $\mathrm{Sym}^w(D)$ is identified with the set of effective $0$-cycles 
of degree $w$ on $D$. 
Let the composite be 
\[
\varphi_{X, D, \beta}: \mathcal{M}_\beta\to\mathrm{Sym}^w(D); 
\ [\mathcal{F}]\mapsto [\mathrm{Supp}(\mathcal{F}).D]. 
\]
\end{definition}

\begin{theorem}[{\cite{Tyurin1988}, \cite{Bottacin1995b}}]
A section $s\in H^0(X, \omega_X^{-1})$ 
determines a Poisson structure on $\mathcal{M}_\beta$ by 
\[
T^*_{[\mathcal{F}]}\mathcal{M}_\beta=
\mathrm{Ext}^1_{\mathcal{O}_X}(\mathcal{F}, \mathcal{F}\otimes\omega_X)
\overset{s}{\to} 
\mathrm{Ext}^1_{\mathcal{O}_X}(\mathcal{F}, \mathcal{F})
=T_{[\mathcal{F}]}\mathcal{M}_\beta. 
\]
\end{theorem}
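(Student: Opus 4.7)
The plan is to split the statement into three claims: the tangent/cotangent identifications in terms of Ext groups, the well-definedness of the bivector $\Pi_s$ attached to $s$, and the Jacobi identity. I expect the Jacobi identity to be the main obstacle.

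First I would verify the tangent identification. Since $\mathcal{F}$ is rank-$1$ torsion-free on an integral curve, it is simple, so standard deformation theory of coherent sheaves on the smooth surface $X$ yields $T_{[\mathcal{F}]}\mathcal{M}_\beta\cong \mathrm{Ext}^1_{\mathcal{O}_X}(\mathcal{F},\mathcal{F})$. For the cotangent space I would apply Serre duality on the projective surface $X$, which supplies the perfect trace--Yoneda pairing
\[
\mathrm{Ext}^1(\mathcal{F},\mathcal{F})\otimes \mathrm{Ext}^1(\mathcal{F},\mathcal{F}\otimes\omega_X)
\xrightarrow{\cup}
\mathrm{Ext}^2(\mathcal{F},\mathcal{F}\otimes\omega_X)
\xrightarrow{\mathrm{tr}}
H^2(X,\omega_X)\cong\mathbb{C},
\]
identifying $T^*_{[\mathcal{F}]}\mathcal{M}_\beta$ with $\mathrm{Ext}^1(\mathcal{F},\mathcal{F}\otimes\omega_X)$.

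Next, to define $\Pi_s$ globally I would use that $s\in H^0(X,\omega_X^{-1})$ is by adjunction a sheaf homomorphism $\omega_X\to\mathcal{O}_X$; tensoring with a (local) universal sheaf on $X\times\mathcal{M}_\beta$ and applying relative $\mathrm{Ext}^1(-,-)$ along the first factor produces the displayed map, naturally in $\mathcal{F}$, and hence a global $2$-tensor. Antisymmetry of $\Pi_s$ then follows from graded commutativity of Yoneda products on $\mathrm{Ext}^\bullet(\mathcal{F},\mathcal{F}\otimes\omega_X^{\otimes\bullet})$: for $\xi,\eta\in\mathrm{Ext}^1(\mathcal{F},\mathcal{F}\otimes\omega_X)$ of degree one, the pairings $\mathrm{tr}(\xi\cup s\eta)$ and $\mathrm{tr}(\eta\cup s\xi)$ differ by the sign $(-1)^{|\xi||\eta|}=-1$.

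The substantive step, and the main obstacle, is the Jacobi identity, i.e.\ the vanishing of the Schouten--Nijenhuis self-bracket $[\Pi_s,\Pi_s]_{SN}\in H^0(\mathcal{M}_\beta,\wedge^3 T\mathcal{M}_\beta)$. At a point $[\mathcal{F}]$, the value of this bracket on a triple of cotangent vectors is computed by a cyclically symmetrized triple Yoneda product involving $s^{\otimes 2}\in H^0(X,\omega_X^{-2})$; schematically it factors through a composition
\[
\mathrm{Ext}^1(\mathcal{F},\mathcal{F})\otimes \mathrm{Ext}^1(\mathcal{F},\mathcal{F})\otimes \mathrm{Ext}^1(\mathcal{F},\mathcal{F}\otimes\omega_X)
\longrightarrow
\mathrm{Ext}^3(\mathcal{F},\mathcal{F}\otimes\omega_X).
\]
The essential mechanism is that $X$ is a smooth surface, so $\mathrm{Ext}^i_{\mathcal{O}_X}(\mathcal{G}_1,\mathcal{G}_2)=0$ for coherent $\mathcal{G}_1,\mathcal{G}_2$ and $i\geq 3$; hence the target group vanishes, $[\Pi_s,\Pi_s]_{SN}$ is identically zero, and $\Pi_s$ is Poisson. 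This cohomological-dimension argument is the content underlying the citations to Tyurin and Bottacin.
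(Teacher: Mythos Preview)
The paper does not give its own proof of this theorem: it is stated with attribution to \cite{Tyurin1988} and \cite{Bottacin1995b} and used as background, so there is nothing in the paper to compare your argument against.

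Your outline is the standard one from those references, and the first two steps (the tangent/cotangent identifications via deformation theory and Serre duality, and the construction and antisymmetry of $\Pi_s$) are fine. The Jacobi step, however, is too quick as written. The Schouten--Nijenhuis bracket $[\Pi_s,\Pi_s]_{SN}$ is a first-order differential operator in the coefficients of $\Pi_s$, not a pointwise tensorial expression, so its value at $[\mathcal{F}]$ is not \emph{a priori} a pure triple Yoneda product landing in $\mathrm{Ext}^3$. What Bottacin actually does is compute $\{f,\{g,h\}\}+\text{cyclic}$ for functions on the moduli space and show, using the Kodaira--Spencer map (equivalently the Atiyah class of the universal sheaf) and compatibility of the trace pairing with cup product, that the derivative terms reassemble into a genuine triple composition in $\mathrm{Ext}^\bullet$, which then vanishes because $\mathrm{Ext}^3_{\mathcal{O}_X}(\mathcal{F},\mathcal{F}\otimes\omega_X)=0$ on a surface. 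Your ``schematically it factors through'' hides exactly this identification; you should either carry it out or cite the relevant computation in \cite{Bottacin1995b} explicitly.
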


\begin{theorem}[{\cite{BiswasGomez2020}, cf. \cite[Theorem 5.5]{Beauville1990}, \cite[Theorem 4.7.5]{Bottacin1995a}}]
Let $D$ be the divisor of a nonzero section $s\in H^0(X, \omega_X^{-1})$. 
Then the fibers of 
$\varphi_{X, D, \beta}|_{\mathcal{M}^\circ_\beta}: 
\mathcal{M}^\circ_\beta\to \mathrm{Sym}^w(D)$
are the symplectic leaves for the Poisson structure 
on $\mathcal{M}^\circ_\beta$ associated to $s$. 
\end{theorem}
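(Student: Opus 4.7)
The plan is to identify, at each $[\mathcal{F}]\in\mathcal{M}^\circ_\beta$, the image of the Poisson anchor $s\colon T^*_{[\mathcal{F}]}\mathcal{M}^\circ_\beta\to T_{[\mathcal{F}]}\mathcal{M}^\circ_\beta$ with the vertical tangent space $\ker(d\varphi_{X,D,\beta})$, and then upgrade this infinitesimal coincidence to an equality of symplectic leaves with fibers via a connectedness argument.

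For the infinitesimal part, write $\mathcal{F}=i_*\mathcal{L}$ with $\mathcal{L}$ a line bundle on a smooth $C\in\Lambda_\beta$. A local resolution of $i_*\mathcal{L}$ yields $\mathcal{E}xt^0_{\mathcal{O}_X}(\mathcal{F},\mathcal{F})=\mathcal{O}_C$ and $\mathcal{E}xt^1_{\mathcal{O}_X}(\mathcal{F},\mathcal{F})=N_{C/X}$; twisting by $\omega_X$ and using $\omega_X|_C\otimes N_{C/X}\cong\omega_C$ turns these into $\omega_X|_C$ and $\omega_C$.  The local-to-global spectral sequence then gives
\[
0\to H^1(\mathcal{O}_C)\to\mathrm{Ext}^1_X(\mathcal{F},\mathcal{F})\to H^0(N_{C/X})\to 0
\]
and
\[
0\to H^1(\omega_X|_C)\to\mathrm{Ext}^1_X(\mathcal{F},\mathcal{F}\otimes\omega_X)\to H^0(\omega_C)\to 0,
\]
into which multiplication by $s$ fits as a map of short exact sequences. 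On the quotients it is induced by the inclusion $\omega_C\cong N_{C/X}(-C\cap D)\hookrightarrow N_{C/X}$, whose image on global sections is $H^0(N_{C/X}(-C\cap D))$; on the subobjects it is induced by $\omega_X|_C\cong\mathcal{O}_C(-C\cap D)\hookrightarrow\mathcal{O}_C$, and since the cokernel $\mathcal{O}_{C\cap D}$ is zero-dimensional, the induced map on $H^1$ is surjective.  Hence the image of $s$ in $\mathrm{Ext}^1_X(\mathcal{F},\mathcal{F})$ is exactly the preimage of $H^0(N_{C/X}(-C\cap D))$.

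On the other hand, $\varphi_{X,D,\beta}$ factors through $\Lambda_\beta$; its differential is the projection $\mathrm{Ext}^1_X(\mathcal{F},\mathcal{F})\twoheadrightarrow H^0(N_{C/X})$ followed by the restriction
\[
H^0(N_{C/X})\longrightarrow H^0(N_{C/X}|_{C\cap D})=T_{[C\cap D]}\mathrm{Sym}^w(D),
\]
where the identification uses $N_{C\cap D/D}\cong N_{C/X}|_{C\cap D}$, valid because $C\cap D$ is supported on $D_{\mathrm{sm}}$. Its kernel is $H^0(N_{C/X}(-C\cap D))$, matching the image of $s$ modulo $H^1(\mathcal{O}_C)$.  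Therefore $\mathrm{image}(s)=\ker(d\varphi_{X,D,\beta})$ as subspaces of $T_{[\mathcal{F}]}\mathcal{M}^\circ_\beta$.

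Finally, the computation shows that the rank of $s$ is locally constant on $\mathcal{M}^\circ_\beta$ (depending only on numerical invariants), so the symplectic leaf through $[\mathcal{F}]$ is open in the fiber of $\varphi_{X,D,\beta}|_{\mathcal{M}^\circ_\beta}$ through $[\mathcal{F}]$; to conclude, one needs the corresponding fibers to be connected, which reduces to the connectedness of the Picard scheme fibered over a connected component of the locus in $\Lambda_\beta$ with fixed intersection divisor on $D$.  The Ext and spectral-sequence steps are essentially formal and parallel \cite{Bottacin1995a,Bottacin1995b,BiswasGomez2020}; the main obstacle I anticipate is precisely this last global connectedness step, since the infinitesimal argument alone only identifies the leaf with a connected component of the fiber.
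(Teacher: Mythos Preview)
The paper does not supply its own proof of this theorem: it is stated as a citation of \cite{BiswasGomez2020} (with antecedents in \cite{Beauville1990} and \cite{Bottacin1995a}) and is immediately followed by Assumption~\ref{assumption}, with no intervening proof environment. So there is nothing in the present paper to compare your argument against.

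That said, your sketch is the standard route taken in the cited references: compute the local $\mathcal{E}xt$ sheaves of $i_*\mathcal{L}$, feed them into the local-to-global spectral sequence to obtain the short exact sequences for $\mathrm{Ext}^1_X(\mathcal{F},\mathcal{F})$ and its $\omega_X$-twist, and then read off that the image of the anchor equals the kernel of $d\varphi_{X,D,\beta}$. Your computation of that image via the inclusion $N_{C/X}(-C\cap D)\hookrightarrow N_{C/X}$ and the surjectivity on $H^1$ is correct and matches the arguments in \cite{Bottacin1995b} and \cite{BiswasGomez2020}.

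You are also right to flag the connectedness issue. The infinitesimal identification only shows that each symplectic leaf is open (hence open and closed, since the rank is constant) in its fiber, so a priori the leaves are the connected components of the fibers. Whether the theorem as literally phrased---``the fibers are the symplectic leaves''---holds then depends on connectedness of the fibers of $\varphi_{X,D,\beta}|_{\mathcal{M}^\circ_\beta}$, which is a separate (and genuinely global) question not settled by the Ext calculation alone. This is handled in the cited sources rather than in the present paper, so your caveat is appropriate.
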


We are going to study partial compactifications of these fibers. 
\begin{assumption}\label{assumption}
We consider the following conditions. 
\begin{itemize}
\item
$X$ is a smooth projective rational surface and $D\in |-K_X|$. 
\item
$\beta\in H^2(X, \mathbb{Z})\cong \mathrm{Pic}(X)$ is the class of a curve. 
\item
$[\mathfrak{d}]\in\mathrm{Sym}^w(D)$ is an effective divisor of degree $w$ 
on $D_{\mathrm{sm}}$, where $w=\beta.D$, 
and $\mathfrak{d}\sim \beta|_D$. 
\end{itemize}
\end{assumption}
Note that, if $X$ is regular ($h^1(\mathcal{O}_X)=0$) and $|-K_X|$ 
contains a nonzero divisor, then $X$ is rational. 

\begin{definition}
Let $X, D, \beta, \mathfrak{d}$ be as in Assumption \ref{assumption}. 
We define $\Moduli{X}{D}{\beta}{\mathfrak{d}}$ to be 
$\varphi_{X, D, \beta}^{-1}([\mathfrak{d}])$. 
\end{definition}

\begin{remark}
In \cite{CGKT2021b}, a moduli space 
$\mathcal{MMI}_\beta^P(X, D)$ was introduced, 
and was applied to the calculation of 
multiplicities of $\mathbb{A}^1$-curves ({\cite[Proposition 1.8 (3)]{CGKT2021a}}). 
This can be seen as an open subset of 
$\Moduli{X}{D}{\beta}{wP}$ 
parametrizing sheaves on $C\subset X$ invertible at $P$. 
\end{remark}

The main object of this paper is 
to study the singularities of $\Moduli{X}{D}{\beta}{\mathfrak{d}}$. 
By the following lemma, 
it can be regarded as the relative compactified Jacobian 
over a (non-proper) $p_a(\beta)$-dimensional base. 

\begin{lemma}\label{lem_linearsystem}
Under Assumption \ref{assumption}, 
the following hold. 
\begin{enumerate}
\item
We have $H^0(\mathcal{O}_D)\cong \mathbb{C}$, 
and hence if $L$ is an invertible sheaf on $D$ with $L\cong\mathcal{O}_D(\mathfrak{d})$, 
then $\mathfrak{d}$ is defined by a section of $L$ 
unique up to scalar multiplications. 
\item
One can identify 
$\Moduli{X}{D}{\beta}{\mathfrak{d}}$ 
with the relative compactified Jacobian over 
\begin{equation}
\Lambda_{\beta, \mathfrak{d}}:=
\{C\in |\beta| :  
\hbox{$C$ is integral, 
$C\not\subseteq D$ and 
$C|_D = \mathfrak{d}$}
\}, \label{eq_lambda_b_d}
\end{equation}
which is of dimension $p_a(\beta)$. 
\end{enumerate}
\end{lemma}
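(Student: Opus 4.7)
The plan is to treat the two parts separately: (1) by a direct cohomology computation on the anticanonical pair, and (2) by a tautological identification together with a dimension count via restriction to $D$ on linear systems.

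For (1), I would work with the ideal sequence $0\to\mathcal{O}_X(-D)\to\mathcal{O}_X\to\mathcal{O}_D\to 0$. Since $D\sim -K_X$, we have $\mathcal{O}_X(-D)\cong\omega_X$; and since $X$ is rational, Serre duality yields $h^0(\omega_X)=h^2(\mathcal{O}_X)=0$ and $h^1(\omega_X)=h^1(\mathcal{O}_X)=0$. The long exact cohomology sequence then identifies $H^0(\mathcal{O}_D)$ with $H^0(\mathcal{O}_X)=\mathbb{C}$. Uniqueness of the section cutting out $\mathfrak{d}$ follows immediately: if $s,s'$ are sections of $L$ with common zero divisor $\mathfrak{d}$, their ratio $s/s'$ is a nowhere-vanishing regular function on $D$, hence a constant by what we have just shown.

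For (2), the identification with the relative compactified Jacobian is essentially tautological. The Fitting-ideal morphism $\mathcal{M}_\beta\to\Lambda_\beta$ has compactified-Jacobian fibers by construction, and $\varphi_{X,D,\beta}$ factors through it via the intersection map $\Lambda_\beta\to\mathrm{Sym}^w(D)$. Since $\mathfrak{d}$ is supported on $D_{\mathrm{sm}}$ and every $C\in\Lambda_\beta$ is integral with $C\not\subseteq D$, the $0$-cycle $C.D$ equals $\mathfrak{d}$ if and only if the Cartier divisor $C|_D$ equals $\mathfrak{d}$. Therefore $\Moduli{X}{D}{\beta}{\mathfrak{d}}\cong \mathcal{M}_\beta\times_{\Lambda_\beta}\Lambda_{\beta,\mathfrak{d}}$, which is the relative compactified Jacobian over $\Lambda_{\beta,\mathfrak{d}}$. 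For the dimension, I would realize $\Lambda_{\beta,\mathfrak{d}}$ as the open locus of integral curves in the fiber over $[\mathfrak{d}]$ of the restriction map $\pi\colon|\beta|\dashrightarrow|L|$, where $L=\mathcal{O}_X(\beta)|_D\cong\mathcal{O}_D(\mathfrak{d})$ by Assumption \ref{assumption} (with part (1) pinning down a specific $s_\mathfrak{d}$ up to scalar). The map $\pi$ is induced by the linear restriction $\rho\colon H^0(\mathcal{O}_X(\beta))\to H^0(L)$ coming from $0\to\mathcal{O}_X(\beta-D)\to\mathcal{O}_X(\beta)\to L\to 0$, and whenever $s_\mathfrak{d}\in\mathrm{image}(\rho)$ the fiber of $\pi$ over $[\mathfrak{d}]$ is an affine space of dimension $h^0(\mathcal{O}_X(\beta-D))$. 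Riemann-Roch with $\beta-D=\beta+K_X$ gives $\chi(\mathcal{O}_X(\beta-D))=p_a(\beta)$; Serre duality gives $h^2(\mathcal{O}_X(\beta-D))=h^0(\mathcal{O}_X(-\beta))=0$ and $h^1(\mathcal{O}_X(\beta-D))=h^1(\mathcal{O}_X(-\beta))$; and the last group vanishes via the sequence $0\to\mathcal{O}_X(-\beta)\to\mathcal{O}_X\to\mathcal{O}_C\to 0$ for an integral $C\in\Lambda_\beta$, since $h^1(\mathcal{O}_X)=0$ and $h^0(\mathcal{O}_C)=1$ force the connecting map to vanish. This simultaneously ensures that $s_\mathfrak{d}$ lifts to $\rho$ and that $h^0(\mathcal{O}_X(\beta-D))=p_a(\beta)$.

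The main obstacle is this last vanishing step, as it relies on the existence of an integral curve in $|\beta|$. In the present setting this is not a serious issue: the lemma has content only when $\Lambda_{\beta,\mathfrak{d}}$ is nonempty, and every element of $\Lambda_{\beta,\mathfrak{d}}$ is by definition such an integral curve, which closes the argument.
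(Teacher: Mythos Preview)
Your proposal is correct and follows essentially the same approach as the paper: the same ideal sequence and vanishing $h^1(\mathcal{O}_X(-D))=h^1(\mathcal{O}_X)=0$ for (1), and for (2) the same restriction exact sequence $0\to\mathcal{O}_X(\beta-D)\to\mathcal{O}_X(\beta)\to\mathcal{O}_X(\beta)|_D\to 0$, the same Serre-duality identification $h^i(\mathcal{O}_X(\beta-D))=h^{2-i}(\mathcal{O}_X(-\beta))$, the vanishing of $h^1(\mathcal{O}_X(-\beta))$ via the ideal sequence of an integral $C\in\Lambda_{\beta,\mathfrak{d}}$, and the same Riemann--Roch computation yielding $h^0(\mathcal{O}_X(\beta-D))=p_a(\beta)$. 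The only cosmetic differences are that the paper phrases the fiber as the projectivization of $\pi^{-1}(\mathbb{C}\sigma)$ rather than an affine space, and deduces the uniqueness in (1) from the sequence $0\to L(-\mathfrak{d})\cong\mathcal{O}_D\to L\to L|_{\mathfrak{d}}\to 0$ rather than via the ratio $s/s'$.
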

\begin{proof}
(1)
The first statement follows from the short exact sequence 
$0\to \mathcal{O}_X(-D)\to \mathcal{O}_X\to\mathcal{O}_D\to 0$ 
and $h^1(\mathcal{O}_X(-D))=h^1(\mathcal{O}_X(K_X+D))=h^1(\mathcal{O}_X)=0$. 
Then the second statement follows from the exact sequence 
$0\to L(-\mathfrak{d})\cong\mathcal{O}_D\to L\to L|_{\mathfrak{d}}\to 0$.

\medbreak
(2)
Set theoretically, the identification is obvious. 
This is also scheme-theoretically true 
by the existence of the natural morphism $\mathcal{M}_\beta\to\Lambda_\beta$, 
see \cite[\S2]{BiswasGomez2020}. 

If $C$ is an element of $\Lambda_{\beta, \mathfrak{d}}$, 
let us consider the long exact sequence 
associated to the sequence 
$0\to \mathcal{O}_X(C-D)\to \mathcal{O}_X(C)\to \mathcal{O}_X(C)|_D\to 0$. 
We see that 
$h^1(\mathcal{O}_X(C-D))=h^1(\mathcal{O}_X(K_X+D-C))=h^1(\mathcal{O}_X(-C))$ 
which is $0$ 
since $C$ is integral and $h^1(\mathcal{O}_X)=0$. 
Thus 
\[
0\to H^0(\mathcal{O}_X(C-D)) \to H^0(\mathcal{O}_X(C))
\overset{\pi}{\to} H^0(\mathcal{O}_X(C)|_D)\to 0
\]
is exact. 
On the other hand, $C|_D$ is defined by a section $\sigma\in H^0(\mathcal{O}_X(C)|_D)$ 
which is unique up to scalar multiplications by (1). 
Then $\Lambda_{\beta, \mathfrak{d}}$ is the projectivization of 
$\pi^{-1}(\mathbb{C}\sigma)$. 
We also have $h^2(\mathcal{O}_X(C-D))=h^0(\mathcal{O}_X(-C))=0$, 
and by Riemann-Roch formula 
\[
\dim \Lambda_{\beta, \mathfrak{d}} = h^0(\mathcal{O}_X(C-D)) = 
\frac{1}{2}(C-D).(C-D-K_X)+\chi(\mathcal{O}_X)=
\frac{1}{2}(C+K_X).C+1=
p_a(C). 
\]
\end{proof}

\section{Hilbert schemes and twisted Abel maps}

In this section, we recall twisted Abel maps from \cite{MRV2017} 
which provide isomorphisms between compactified Jacobians and Hilbert schemes. 
We also give definitions 
concerning families of local curves 
with fixed orders of intersection with the boundary. 

First we fix notations related to local Hilbert functors, 
following \cite{Sernesi2006}. 
\begin{definition}
\begin{enumerate}
\item
Let $R$ be a local noetherian $\mathbb{C}$-algebra with residue field $\mathbb{C}$. 
Then $\mathcal{A}_R$ 
denotes the category of local artinian 
$R$-algebras with residue field $\mathbb{C}$, 
with local $R$-homomorphisms as morphisms. 
If $R$ is complete, let $\hat{\mathcal{A}}_R$ 
be the category of complete local noetherian $R$-algebras 
with residue field $\mathbb{C}$. 
The closed point of the (formal) spectrum of a local ring 
will be denoted by $0$. 
\item
Let $R$ be a (complete) local noetherian $\mathbb{C}$-algebra 
with residue field $\mathbb{C}$ 
and $\Lambda$ its (formal) spectrum. 
For a (formal) scheme $X$ over $\Lambda$ 
and a closed subscheme $Z$ of $X_0$, 
let $H^{X/\Lambda}_Z: \mathcal{A}_R\to(\mathrm{Set})$ 
be the local Hilbert functor (\cite[\S3.2]{Sernesi2006}) 
defined as follows: 
\[
H^{X/\Lambda}_Z(A):=\{\hbox{closed subscheme $\tilde{Z}\subset X\otimes_R A$ 
flat over $A$ with $\tilde{Z}_0=Z$} \}. 
\]
If $R=\mathbb{C}$, we write $H^X_Z = H^{X/\mathrm{Spec}\ \mathbb{C}}_Z$. 
\end{enumerate}
\end{definition}

\begin{definition}
Let $\FormalHilb{X/\Lambda}{Z}$ (resp. $\FormalHilb{X}{Z}$) 
denote the formal spectrum 
of the complete local ring that pro-represents $H^{X/\Lambda}_Z$ (resp. $H^{X}_Z$). 
This is naturally isomorphic to the completion of the Hilbert (formal) scheme 
of $X/\Lambda$ 
(or of $\hat{X}/\hat{\Lambda}$ where 
$\hat{X}$ and $\hat{\Lambda}$ are completions along $Z$ and $0$) 
at $[Z]$. 

If $Z$ is defined by an ideal $I$, 
we write $\FormalHilb{X}{I}=\FormalHilb{X}{Z}$. 
\end{definition}

Let us introduce the twisted Abel maps as in \cite[\S2]{MRV2017} 
(see also \cite[Definition 5.16]{AK1980}). 
For simplicity, we will mostly work with formal neighborhoods of 
points in the moduli spaces in what follows, 
although we could work a little more globally. 

\begin{definition}
Let $S$ be the formal spectrum of an object of $\hat{\mathcal{A}}_{\mathbb{C}}$ 
and $\mathcal{C}\to S$ a proper flat family of integral curves. 
We denote the compactified Jacobian of $\mathcal{C}/S$ 
by $\bar{J}(\mathcal{C}/S)$. 
For a positive integer $d$ and an invertible sheaf $\mathcal{L}$ on $\mathcal{C}$, 
we define the \emph{$\mathcal{L}$-twisted Abel map} 
\[
A^d_{\mathcal{L}}: \mathrm{Hilb}^d(\mathcal{C}/S) \to \bar{J}(\mathcal{C}/S) 
\]
as the morphism representing the natural transform of functors 
which maps a flat family $\mathcal{Z}\subset \mathcal{C}\times_S T$ 
of subschemes over $T$ 
to 
$I_{\mathcal{Z}\subset \mathcal{C}\times_S T}
\otimes (\mathcal{L}\otimes_{\mathcal{O}_S} \mathcal{O}_T)$. 

If $[Z]\in \mathrm{Hilb}^d(\mathcal{C}_0)$ 
is a $\mathbb{C}$-valued point and 
$[\mathcal{F}]=A^d_{\mathcal{L}}([Z])$, 
let 
\[
A_{\mathcal{L}, Z}: \FormalHilb{\mathcal{C}/S}{Z} \to 
\bar{J}(\mathcal{C}/S)^\wedge_{[\mathcal{F}]} 
\]
be the induced morphism, where 
$\bar{J}(\mathcal{C}/S)^\wedge_{[\mathcal{F}]}$ 
is the completion at $[\mathcal{F}]$. 
\end{definition}

\begin{proposition}[{cf. \cite[Proposition 2.5]{MRV2017}}]\label{prop_twisted_abel_map}
Let $S$ be the formal spectrum of an object of $\hat{\mathcal{A}}_{\mathbb{C}}$, 
and $\mathcal{C}\to S$ a proper flat family of integral Gorenstein curves 
of arithmetic genus $g$, with central fiber $C$. 
Let $\mathcal{F}$ be a torsion-free sheaf of rank $1$ on $C$. 
Then there exists a line bundle $\mathcal{L}$ on $\mathcal{C}$ 
such that 
$(A^g_{\mathcal{L}})^{-1}([\mathcal{F}])=\{[Z]\}$ for a $0$-dimensional 
subscheme $Z$ of $C$ 
and 
$A_{\mathcal{L}, Z}: \FormalHilb{\mathcal{C}/S}{Z} \to 
\bar{J}(\mathcal{C}/S)^\wedge_{[\mathcal{F}]}$ 
is an isomorphism. 

More precisely, the following hold. 

(1)
There exists a $0$-dimensional subscheme $Z_0$ of $C$ 
satisfying the following conditions. 
\begin{itemize}
\item
$I_{Z_0}$ contains the conductor ideal 
$\mathcal{A}nn (\nu_* \mathcal{O}_{\tilde{C}}/\mathcal{O}_C)$, 
where $\nu: \tilde{C}\to C$ is the normalization. 
\item
$\mathcal{F}_P\cong (I_{Z_0})_P$ for any $P\in C$, 
or equivalently $\mathcal{F}\cong I_{Z_0}\otimes L_0$ 
for an invertible sheaf $L_0$. 
\end{itemize}

(2)
Assume that $Z_0$ satisfies the conditions in (1) 
and let $d$ be the length of $Z_0$. 
If we set $Z=Z_0\cup \{R_1, \dots, R_{g-d}\}$ 
for general points $R_1, \dots, R_{g-d}\in C_{\mathrm{sm}}$, 
then 
$\mathrm{Hom}_{\mathcal{O}_C}(I_Z, \mathcal{O}_C)\cong\mathbb{C}$ 
and 
$\mathrm{Ext}^1_{\mathcal{O}_C}(I_Z, \mathcal{O}_C)=0$ 
hold. 

(3) 
If $Z$ is a $0$-dimensional subscheme of $C$ satisfying 
$(I_Z)_P\cong \mathcal{F}_P$ for any $P\in C$, 
$\mathrm{Hom}_{\mathcal{O}_C}(I_Z, \mathcal{O}_C)\cong\mathbb{C}$ 
and 
$\mathrm{Ext}^1_{\mathcal{O}_C}(I_Z, \mathcal{O}_C)=0$, 
then there exists a line bundle $\mathcal{L}$ on $\mathcal{C}$ 
such that 
$(A^g_{\mathcal{L}})^{-1}([\mathcal{F}])=\{[Z]\}$, 
and for any such $\mathcal{L}$, 
$A_{\mathcal{L}, Z}: \FormalHilb{\mathcal{C}/S}{Z} \to 
\bar{J}(\mathcal{C}/S)^\wedge_{[\mathcal{F}]}$ 
is an isomorphism. 
\end{proposition}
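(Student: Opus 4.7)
For (1), the approach is first to realize $\mathcal{F}$ as a twisted ideal sheaf, and then to modify the embedding so that the ideal contains the conductor. For $L_0$ a sufficiently positive line bundle on $C$, the sheaf $\mathcal{F}^{\vee}\otimes L_0$ is globally generated, and a choice of section yields an injection $\mathcal{F}\hookrightarrow L_0$, realizing $\mathcal{F}\cong I_{Z_0}\otimes L_0$ for a $0$-dimensional $Z_0\subset C$. Multiplying the section locally at each singular point by a suitable rational function, one arranges that $I_{Z_0}$ contains the conductor $\mathfrak{c}=\mathcal{A}nn(\nu_{*}\mathcal{O}_{\tilde{C}}/\mathcal{O}_C)$. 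The payoff is the standard Gorenstein identity $\mathcal{H}om_{\mathcal{O}_C}(\mathfrak{c},\mathcal{O}_C)\cong\nu_{*}\mathcal{O}_{\tilde{C}}$, from which the conductor containment gives
\[
\mathrm{Hom}(I_{Z_0},\mathcal{O}_C)\subseteq \mathrm{Hom}(\mathfrak{c},\mathcal{O}_C)=H^0(\tilde{C},\mathcal{O}_{\tilde{C}})=\mathbb{C};
\]
together with the obvious inclusion $\mathbb{C}\subseteq\mathrm{Hom}(I_{Z_0},\mathcal{O}_C)$, this yields $\mathrm{Hom}(I_{Z_0},\mathcal{O}_C)=\mathbb{C}$, the extra property of $Z_0$ that will drive the rest of the argument.

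For (2), the plan is to apply Serre duality on the Gorenstein curve $C$ in order to collapse the two conditions into a single cohomological one. Serre duality gives $\mathrm{Ext}^i(I_Z,\mathcal{O}_C)\cong H^{1-i}(I_Z\otimes\omega_C)^{\vee}$, and since $\mathrm{length}(Z)=g$, Riemann--Roch yields $\chi(I_Z\otimes\omega_C)=-1$, so both required conditions are equivalent to the single statement $h^0(I_Z\otimes\omega_C)=0$. From (1) and Serre duality, $h^1(I_{Z_0}\otimes\omega_C)=1$, and hence $h^0(I_{Z_0}\otimes\omega_C)=g-d$. A base-point-free argument applied inductively---at each step, a general smooth point lies off the base locus of the remaining sections, and so its addition drops $h^0$ by exactly one---reduces $h^0$ to zero after $g-d$ steps.

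For (3), the plan is to produce $\mathcal{L}$, verify the set-theoretic fiber, and then upgrade to a formal-scheme isomorphism. Because $(I_Z)_P\cong\mathcal{F}_P$ at every $P$, the sheaf $L:=\mathcal{H}om_{\mathcal{O}_C}(I_Z,\mathcal{F})$ is a line bundle with $\mathcal{F}\cong I_Z\otimes L$; the obstruction to lifting $L$ to a line bundle $\mathcal{L}$ on $\mathcal{C}$ lies in $H^2(C,\mathcal{O}_C)=0$, so such an $\mathcal{L}$ exists, and by construction $A^g_{\mathcal{L}}([Z])=[\mathcal{F}]$. For any $Z'$ in the preimage, $I_{Z'}\cong I_Z$ as $\mathcal{O}_C$-modules, and the unique-up-to-scalar embedding $I_Z\hookrightarrow\mathcal{O}_C$ afforded by $\mathrm{Hom}(I_Z,\mathcal{O}_C)=\mathbb{C}$ forces $Z'=Z$. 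The differential of $A_{\mathcal{L},Z}$ at $[Z]$ is identified with the connecting map $\mathrm{Hom}(I_Z,\mathcal{O}_Z)\to\mathrm{Ext}^1(I_Z,I_Z)$ coming from applying $\mathrm{Hom}(I_Z,-)$ to $0\to I_Z\to\mathcal{O}_C\to\mathcal{O}_Z\to 0$, and the Ext-vanishing hypothesis together with $\mathrm{Hom}(I_Z,I_Z)\cong\mathrm{Hom}(I_Z,\mathcal{O}_C)=\mathbb{C}$ forces this connecting map to be an isomorphism.

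The main obstacle I expect is the final step of (3), upgrading the tangent-space isomorphism to a formal-scheme isomorphism over the artinian base $S$. This will require a relative version of the preceding Ext computation, with careful handling of the base change of Ext groups and of the compatibility of $\mathcal{L}$ over Artinian thickenings, in order to conclude that the Abel map, viewed as a natural transformation of the associated deformation functors, is an isomorphism on every Artinian extension.
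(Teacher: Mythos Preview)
Your overall strategy for (2) and (3) matches the paper's closely: the paper also uses Serre duality to reduce (2) to $h^0(I_Z\otimes\omega_C)=0$ and then drops $h^0$ by one at each general smooth point, and for (3) constructs $L=\mathcal{H}om_{\mathcal{O}_C}(I_Z,\mathcal{F})$, extends it over $\mathcal{C}$, and then defers the formal isomorphism to \cite[Theorem 5.18]{AK1980}. Your acknowledged ``obstacle'' at the end of (3) is thus handled in the paper by citation rather than by a direct relative Ext computation, so you are not missing anything the paper supplies.

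There is, however, a genuine gap in your (1). The sentence ``multiplying the section locally at each singular point by a suitable rational function, one arranges that $I_{Z_0}$ contains the conductor'' presupposes exactly the local fact that needs proof. Multiplying by a rational function $f$ replaces $(I_{Z_0})_P$ by $f\cdot(I_{Z_0})_P$, which is an isomorphic $\mathcal{O}_{C,P}$-module; so the existence of such an $f$ amounts to the assertion that the isomorphism class of $\mathcal{F}_P$ is representable by an ideal of $\mathcal{O}_{C,P}$ containing the conductor. This is not automatic and is precisely where the Gorenstein hypothesis enters. The paper's argument is local and direct: since $R=\mathcal{O}_{C,P}$ is Gorenstein, $M\mapsto M^*=\mathrm{Hom}_R(M,R)$ is an involution on torsion-free rank-$1$ modules; one realizes $(\mathcal{F}_P)^*$ as a fractional ideal $M$ with $R\subseteq M\subseteq\tilde{R}$ (this is the nontrivial step, cited from \cite{FGS1999}), and then $\mathcal{F}_P\cong M^*\subseteq R$ automatically contains $\tilde{R}^*=\mathfrak{c}$. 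Your global-generation step is fine for producing \emph{some} $Z_0$ with $\mathcal{F}\cong I_{Z_0}\otimes L_0$, but to get conductor containment you need this local duality input, not a rational-function adjustment.
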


\begin{proof}
(1)
For a point $P\in C$, 
let $R=\mathcal{O}_{C, P}$ and 
$\tilde{R}$ the normalization of $R$. 
Since $R$ is Gorenstein, 
the assignment $M\mapsto M^*:=\mathrm{Hom}_R(M, R)$ 
is involutory up to a natural isomorphism for finitely generated torsion free modules 
(\cite[Lemma 1.1]{Hartshorne1986}), 
and induces an involution on the set of fractional ideals of $R$. 
The $R$-module $(\mathcal{F}_P)^*$ 
is torsion-free of rank-$1$, 
and hence is isomorphic to an $R$-module $M$ with 
$R\subseteq M\subseteq \tilde{R}$ 
(see e.\,g. \cite[Proof of Corollary C.3]{FGS1999}). 
Thus $\mathcal{F}_P$ is isomorphic to 
$I^{(P)}:=\mathrm{Hom}_R(M, R)\subseteq R$, 
which is an ideal of $R$ containing the conductor 
$\mathrm{Ann}(\tilde{R}/R)\cong 
\mathrm{Hom}_R(\tilde{R}, R)$. 
Patching $I^{(P)}$ for all $P\in C$, we obtain the desired $I_{Z_0}$. 

\medbreak
(2)
By the duality on fractional ideals, 
$\mathcal{O}_C\subseteq \mathcal{H}om_{\mathcal{O}_C}(I_{Z_0}, \mathcal{O}_C)\subseteq
\nu_*\mathcal{O}_{\tilde{C}}$ holds. 
It follows that $\mathrm{Hom}_{\mathcal{O}_C}(I_{Z_0}, \mathcal{O}_C)\cong\mathbb{C}$. 
By Grothendieck duality and Riemann-Roch formula, we have 
$h^1(\omega_C\otimes I_{Z_0})=1$ and 
$h^0(\omega_C\otimes I_{Z_0})=g-d$. 

Inductively, for $i=1, 2, \dots, g-d$, 
we take a general smooth point $R_i$ such that the map 
$H^0(\omega_C\otimes I_{Z_{i-1}})\to H^0((\omega_C\otimes I_{Z_{i-1}})|_{R_i})$ is nonzero, 
and set $Z_i=Z_{i-1}\cup \{R_i\}$. 
Then $h^0(\omega_C\otimes I_{Z_i})=g-d-i$ 
and $h^1(\omega_C\otimes I_{Z_i})=1$. 
For $Z:=Z_{g-d}$, 
we have 
$\mathrm{Hom}_{\mathcal{O}_C}(I_Z, \mathcal{O}_C)\cong\mathbb{C}$ and 
$\mathrm{Ext}^1_{\mathcal{O}_C}(I_Z, \mathcal{O}_C)=0$ 
by duality. 

\medbreak
(3)
We take $L$ to be $\mathcal{H}om_{\mathcal{O}_C}(I_Z, \mathcal{F})$ and 
extend it to a line bundle $\mathcal{L}$ on $\mathcal{C}$. 

Then we have $(A^g_{\mathcal{L}})^{-1}([\mathcal{F}])=\{[Z]\}$, 
and 
$\mathrm{Hom}_{\mathcal{O}_C}(\mathcal{F}, \mathcal{L}|_C)\cong\mathbb{C}$ and 
$\mathrm{Ext}^1_{\mathcal{O}_C}(\mathcal{F}, \mathcal{L}|_C)=0$ holds. 
By essentially the same arguments as in \cite[Theorem 5.18]{AK1980}, 
$A_{\mathcal{L}, Z}$ is an isomorphism. 
\end{proof}

We introduce some notations and terminologies 
concerning the families of germs of curves under consideration. 

\begin{definition}\label{def_w_contact}
Let $x, y$ denote the formal coordinates of $\hat{\mathbb{A}}^2$, 
and $D$ the divisor defined by $y$. 
Let $\Lambda$ be the formal spectrum of 
an object of $\hat{\mathcal{A}}_{\mathbb{C}}$. 
Hereafter, $\lambda$ symbolically represents a coordinate system on $\Lambda$, 
and $0$ the closed point. 
Thus a section of $\mathcal{O}_{\hat{\mathbb{A}}^2\times \Lambda}$ 
is written as $A_\lambda(x, y)$ and 
its restriction to the closed fiber as $A_0(x, y)$, and so on. 
\begin{enumerate}
\item
A \emph{family of $w$-contact equations} over $\Lambda$ 
is a section $\mathcal{E}$ of $\mathcal{O}_{\hat{\mathbb{A}}^2\times \Lambda}$ 
such that $\mathcal{E}|_{D\times \Lambda}=x^wu$ 
for a unit $u\in\mathcal{O}_{D\times \Lambda}^\times$. 
We write $\mathcal{E}$ as $E_{\lambda}(x, y)$ 
or $\{E_{\lambda}(x, y)\}_{\lambda\in\Lambda}$. 
Then we can write
\begin{equation}
E_{\lambda}(x, y) = yf_\lambda(x, y) + x^w g_\lambda(x) \label{eqn_w_contact}
\end{equation}
in a unique way. 
\item
A \emph{family of $w$-contact curves} over $\Lambda$ 
is a closed formal subscheme 
$\mathcal{C}=\{C_{\lambda}\}_{\lambda\in\Lambda}$
of $\hat{\mathbb{A}}^2\times \Lambda$ 
defined by a family of $w$-contact equations over $\Lambda$. 
\end{enumerate}
\end{definition}

Given a family $\{C_{\lambda}\}_{\lambda\in\Lambda}$ of $w$-contact curves 
defined by 
$E_{\lambda}(x, y) = yf_\lambda(x, y) + x^w g_\lambda(x)$, 
we can take another defining equation with $g_\lambda(x)=1$ 
by multiplying $g_\lambda(x)^{-1}$. 
By further multiplying a unit of the form $1+yh_\lambda(x, y)$, 
we may assume that $f_\lambda(x, y)$ is polynomial in $x$, of degree at most $w-1$. 
In other words, it is a monic polynomial of degree $w$ in $x$, 
and is the distinguished polynomial given by Weierstrass preparation theorem. 

\begin{definition}
A family of $w$-contact equations 
$E_{\lambda}(x, y) = yf_\lambda(x, y) + x^w g_\lambda(x)$ 
over $\Lambda$ is in \emph{normal form} 
if $g_\lambda(x)=1$, 
and is \emph{distinguished} 
if $g_\lambda(x)=1$ and 
$f_\lambda(x, y)$ is a polynomial of degree at most $w-1$ in $x$. 
\end{definition}

Let us look at the easiest case of our question. 

\begin{example}\label{ex_codim2}
Let $X$ be a rational surface, 
$D$ an anticanonical curve on $X$, $\beta$ a curve class on $X$ 
of arithmetic genus $g=p_a(\beta)$ 
and $\mathfrak{d}$ an effective divisor on $D_{\mathrm{sm}}$ 
with $\beta|_D\sim \mathfrak{d}$. 
Let $C_0\in \Lambda_{\beta, \mathfrak{d}}$ 
be nodal at $P\in\mathrm{Supp}\ \mathfrak{d}$, 
and $\mathcal{F}_0$ a torsion-free sheaf on $C_0$ invertible outside $P$ 
with $(\mathcal{F}_0)_P\cong (I_{P\subset C_0})_P$, 
such as $I_{P\subset C_0}$ itself. 
Then $[\mathcal{F}_0]\in\Moduli{X}{D}{\beta}{\mathfrak{d}}$ 
would be the easiest type of singularity. 

To see what this point looks like, 
assume that 
$\Lambda_{\beta, \mathfrak{d}}$ contains a curve $C_1$ nonsingular at $P$. 
Actually, this holds under our assumptions by Lemma \ref{lem_main_surjectivity} (2). 
Let $\mathcal{C}_{\beta, \mathfrak{d}}$ denote the total space of the family 
over $\Lambda_{\beta, \mathfrak{d}}$. 
By Proposition \ref{prop_twisted_abel_map}, 
a twisted Abel map gives a formal isomorphism from 
$([\mathcal{F}_0]\in\Moduli{X}{D}{\beta}{\mathfrak{d}})$ to 
$([Z]\in \mathrm{Hilb}^g(\mathcal{C}_{\beta, \mathfrak{d}}/\Lambda_{\beta, \mathfrak{d}}))$ 
where $Z=\{P, P_1\dots, P_{g-1}\}$ for distinct smooth points $P_1\dots, P_{g-1}\in C_0$. 
Further, 
the formal neighborhood 
$([Z]\in \mathrm{Hilb}^g(\mathcal{C}_{\beta, \mathfrak{d}}/\Lambda_{\beta, \mathfrak{d}}))$ 
is isomorphic to 
$(P\in \mathcal{C}_{\beta, \mathfrak{d}})\times \hat{\mathbb{A}}^{g-1}$, 
since $P_1\dots, P_{g-1}$ can move in the smooth parts of the fibers 
over $\Lambda_{\beta, \mathfrak{d}}$. 

We claim that $(P\in \mathcal{C}_{\beta, \mathfrak{d}})$ 
is formally isomorphic to $(\hbox{$A_{w-1}$-singularity})\times \hat{\mathbb{A}}^{g-1}$. 
In fact, $C_0$ can have any singularity here. 
Choose formal coordinates $x, y$ such that 
$D=(y=0)$ and $C_0=(yf_0(x, y)+x^w=0)$ with $f_0(0, 0)=0$. 
Then $C_1=(y+x^w v_1(x)=0)$ for some $v_1\in\mathbb{C}[[x]]^\times$, 
and $(P\in \mathcal{C}_{\beta, \mathfrak{d}})$ is defined by 
\[
(y f_0(x, y)+x^w) + s_1(y+x^w v_1(x))u_1(x, y)+\sum_{i=2}^g s_i (y f_i(x, y)+x^wg_i(x)),  
\]
where $s_1, \dots, s_g$ are coordinates on $\Lambda_{\beta, \mathfrak{d}}$, 
$u_1\in \mathbb{C}[[x, y]]^\times$, $f_i\in\mathbb{C}[[x, y]]$ and $g_i\in\mathbb{C}[[x]]$. 
We can rewrite this as 
\[
y\left(u_1(0, 0)s_1+ \varepsilon \right)
+ x^wU
\]
where $\varepsilon_1\in \langle x, y, s_2, \dots, s_g\rangle$ 
and $U\in \mathbb{C}[[x, y, s_1, \dots, s_g]]^\times$, 
so we have the claim. 
\end{example}

\section{A correspondence of Hilbert schemes}

In this section, we show that 
the completion of our relative Hilbert scheme at a point 
is smooth over 
a product of local Hilbert schemes of $A_n$-singularities 
under a certain condition. 
To give an idea, 
let us look at an example of codimension $4$ singularities. 

\subsection{A codimension $4$ example}

Let $C_0$ be a curve 
with a \emph{tacnode} at $P$. 
Choosing formal coordinates at $P$, 
the boundary $D$ is given by $y=0$ and the curve $C_0$ by 
\[
(y-u(x)x^2)(y-v(x)x^{w-2})=0, 
\]
where $u(x), v(x)\in\mathbb{C}[[x]]^\times$. 
Let $Z$ be the $0$-dimensional closed subscheme of $C_0$ 
defined by $\langle y, x^2\rangle$. 
Then, for a family $\mathcal{C}/\Lambda$ with central fiber $C_0$, 
the point $[Z]\in\mathrm{Hilb}^2(\mathcal{C}/\Lambda)$ would be the 
next type of singularity to consider. 

For a more specific example, 
let us consider the equation 
\[
C_{s, t}: F_{s, t}(x, y)=(y^2+x^4)+sx(y+x^3)+t(y+x^4)=0 
\]
of a family of $4$-contact curves 
over a $2$-dimensional space $\Lambda$ with coordinates $s, t$. 
This is an affine part of 
the linear system spanned by curves $y^2+x^4=0$ (tacnodal at $P$), 
$x(y+x^3)=0$ (nodal) and $y+x^4$ (nonsingular). 

Locally, we can embed the relative Hilbert scheme into $\mathbb{C}^6$ 
and write down a system of defining equations as follows. 
The Hilbert scheme $\mathrm{Hilb}^2(\mathbb{A}^2)$ 
contains an affine open subset $\mathbb{A}^4\subset\mathrm{Hilb}^2(\mathbb{A}^2)$ 
representing the family of closed subschemes 
\[
Z_{k, l, m, n}: y-kx-l=0,\quad x^2-mx-n=0 
\]
of length $2$. 
The relative Hilbert scheme is locally given by
\[
\mathcal{H}=\{(s, t, k, l, m, n)\mid Z_{k, l, m, n}\subset C_{s, t}\}\subset\mathbb{C}^6. 
\]
Substituting $y=kx+l$ into the equation for $C_{s, t}$ 
and reducing it modulo $x^2-mx-n$, 
we obtain a linear polynomial in $x$. 
The coefficients of $1$ and $x$ provide a system of equations for $\mathcal{H}$: 
\begin{align*}
& n(sm^2+sk+k^2+m^2)+tm^2n+tl+l^2+n^2(1+s+t)=0, \\
& m(sm^2+sk+k^2+m^2)+tm^3+tk+sl+2kl+2mn(1+s+t)=0. 
\end{align*}
One can check that 
the singular locus $\mathrm{Sing}(\mathcal{H})$ is given by 
\[
t=l=n=0,\quad sm^2+sk+k^2+m^2=0, 
\]
and that at a general point of $\mathrm{Sing}(\mathcal{H})$, 
$\mathcal{H}$ is formally isomorphic to $(\hbox{$A_3$-singularity})\times \mathbb{C}^2$. 
The singular locus $\mathrm{Sing}(\mathcal{H})$ itself is irreducible 
with an $A_1$-singularity at the origin. 

Let us show that $(0\in\mathcal{H})$ is formally isomorphic to the point 
$([\tilde{Z}]\in\mathrm{Hilb}^2(S))$, 
where $S=(yz+x^4=0)$ is an $A_3$-singularity 
and $\tilde{Z}\subset S$ is defined by $\langle z, y, x^2\rangle$, 
using the following ``trick.'' 
The condition $Z_{k, l, m, n}\subset C_{s, t}$ is expressed as 
\[
(y^2+x^4)+sx(y+x^3)+t(y+x^4)\in \langle y-kx-l, x^2-mx-n\rangle. 
\]
The left hand side is equal to 
$y(y+sx+t)+(1+s+t)x^4$, and since $y\equiv kx+l$ modulo the right hand side, 
this is equivalent to 
\[
y\{(s+k)x+(t+l)\}+(1+s+t)x^4\in \langle y-kx-l, x^2-mx-n\rangle, 
\]
and in a neighborhood of $(s=t=0)$, also to 
\[
y\{(1+s+t)^{-1}(s+k)x+(1+s+t)^{-1}(t+l)\}+x^4\in \langle y-kx-l, x^2-mx-n\rangle. 
\]
Now we change the coordinates of the parameter space 
from $(k, l, m, n, s, t)$ to $(k, l, m, n, s', t')$ where 
\[
s'=(1+s+t)^{-1}(s+k),\ t'=(1+s+t)^{-1}(t+l), 
\]
and introducing a new variable $z$, the condition is equivalent to 
\[
yz+x^4\in \langle z-s'x-t', y-kx-l, x^2-mx-n\rangle. 
\]
The last condition gives equations for a neighborhood of $[\tilde{Z}]$ in $\mathrm{Hilb}^2(S)$, 
and we have shown the claim. 
Roughly, what we have done can be seen as lifting the ideals to the $z$-direction 
using the parameters for the curves.

\subsection{A nondegeneracy condition}

To generalize the above example, 
we first formulate a certain nondegeneracy condition 
for families of curves. 
Let $\mathcal{O}=\mathbb{C}[[x, y]]$ in what follows.

\begin{definition}\label{def_Phi_nondegeneracy}
Let $\Lambda$ be the formal spectrum of an object of $\hat{\mathcal{A}}_{\mathbb{C}}$ 
which is regular, 
$0$ its closed point, 
$\mathcal{E}=\{E_{\lambda}\}_{\lambda\in \Lambda}$ 
an element of $\mathcal{O}_{\hat{\mathbb{A}}^2\times\Lambda}$ 
and $I\subseteq \mathcal{O}$ an ideal of finite colength containing $E_0$. 
\begin{enumerate}
\item
We define maps 
\[
\tilde{\Delta}_{\mathcal{E}}: 
T_0\Lambda \to \mathcal{O};  
\ \bv\mapsto 
\bv (E_{\lambda}(x, y))
\]
and 
\[
\Delta_{\mathcal{E}, I}: 
T_0\Lambda \to \mathcal{O}/I; 
\ \bv\mapsto 
\tilde{\Delta}_{\mathcal{E}}(\bv)
+ I. 
\]
Here $\bv$ is regarded as a partial derivation acting on the $\lambda$-coordinates. 
\item
Assume that $\mathcal{E}=\{E_{\lambda}\}_{\lambda\in \Lambda}$ 
is a family of $w$-contact equations (Definition \ref{def_w_contact}) given as 
\[
E_{\lambda}(x, y)=y f_\lambda(x, y) + x^w g_\lambda(x). 
\]
We define maps 
\[
\tilde{\Phi}_{\mathcal{E}}: 
T_0\Lambda \to \mathcal{O};  
\ \bv\mapsto 
\bv \left( 
\frac{f_\lambda(x, y)}{g_\lambda(x)}
\right)
= 
\bv \left( 
\frac{(E_{\lambda}(x, y)-E_{\lambda}(x, 0))/y}{E_{\lambda}(x, 0)/x^w} 
\right)
\]
and 
\[
\Phi_{\mathcal{E}, I}: 
T_0\Lambda \to \mathcal{O}/I; 
\ \bv\mapsto 
\tilde{\Phi}_{\mathcal{E}}(\bv)
+ I. 
\]
Then we consider the following condition: 
\begin{center}
($\ast$) The map $\Phi_{\mathcal{E}, I}$ is surjective. 
\end{center}
\end{enumerate}
We also write $\Delta_{\mathcal{E}, I_Z}$ and $\Phi_{\mathcal{E}, I_Z}$ 
as $\Delta_{\mathcal{E}, Z}$ and $\Phi_{\mathcal{E}, Z}$. 
\end{definition}

We see that the condition ($\ast$) is invariant under certain coordinate changes, 
and that it is determined by the family of curves defined by $\mathcal{E}$. 

\begin{definition}
We say that an automorphism $\varphi$ of $\mathcal{O}$ 
is \emph{strata-preserving} if it maps 
$\langle y\rangle$ to $\langle y\rangle$ 
and $\langle x, y\rangle$ to $\langle x, y\rangle$. 
\end{definition}

\begin{proposition}\label{prop_invariance}
\begin{enumerate}
\item
The condition ($\ast$) is invariant under a multiplication 
by a unit $u_\lambda(x, y)$, 
\[
E_{\lambda}(x, y)\leadsto u_\lambda(x, y)E_{\lambda}(x, y). 
\]
In other words, the condition ($\ast$) depends only on the family 
of $w$-contact curves $\{C_\lambda\}_{\lambda\in \Lambda}$ 
defined by $\{E_{\lambda}\}_{\lambda\in \Lambda}$ 
as a closed formal subscheme of $\hat{\mathbb{A}}^2\times\Lambda$. 
\item
The condition ($\ast$) is invariant under a 
strata-preserving change of coordinates $(x, y)$: 
if $\varphi: \mathcal{O}\to\mathcal{O}$ is a strata-preserving automorphism, 
then $\Phi_{\mathcal{E},  I}$ is surjective if and only if 
$\Phi_{\varphi(\mathcal{E}), \varphi(I)}$ is surjective, 
where $\varphi(\mathcal{E}):=\{\varphi(E_{\lambda}(x, y))\}_{\lambda\in\Lambda}$. 
Here $\varphi$ is identified with $\varphi\hat{\otimes} id_{\mathcal{O}_\Lambda}$. 
\end{enumerate}
\end{proposition}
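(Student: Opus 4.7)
My plan is to handle (1) by factoring an arbitrary unit $u_\lambda(x,y)$ as $u_\lambda(x,0) \cdot (1 + y h_\lambda(x,y))$ and checking invariance for each factor, and to handle (2) by first reducing to normal form via (1) and then computing directly.

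For (1), multiplying $E_\lambda$ by the factor $u_\lambda(x,0)$ scales $f_\lambda$ and $g_\lambda$ by the same function of $x$, so $f_\lambda/g_\lambda$ and hence $\tilde{\Phi}_\mathcal{E}$ are unchanged. For the remaining factor $1 + y h_\lambda$, a direct calculation using the decomposition $E_\lambda = y f_\lambda + x^w g_\lambda$ gives $\tilde{g}_\lambda = g_\lambda$ and $\tilde{f}_\lambda = f_\lambda + h_\lambda E_\lambda$, so $\tilde{f}_\lambda/\tilde{g}_\lambda - f_\lambda/g_\lambda = (h_\lambda/g_\lambda) E_\lambda$. The key technical input is the congruence
\[
\bv(E_\lambda)|_0 \equiv y g_0 \cdot \tilde{\Phi}_{\mathcal{E}}(\bv) \pmod{I},
\]
which follows by expanding $\bv(E_\lambda)|_0 = y\,\bv(f_\lambda)|_0 + x^w \bv(g_\lambda)|_0$, solving for $\bv(f_\lambda)|_0$ via $\bv(f_\lambda)|_0 = g_0 \tilde{\Phi}_{\mathcal{E}}(\bv) + (f_0/g_0)\bv(g_\lambda)|_0$, and using that $E_0\in I$. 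Differentiating the above relation and reducing modulo $I$ then yields $\Phi_{\tilde{\mathcal{E}}, I}(\bv) = (1 + h_0 y)\,\Phi_{\mathcal{E},I}(\bv)$; since $1 + h_0 y$ is a unit in $\mathcal{O}$ (hence in $\mathcal{O}/I$), surjectivity is preserved.

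For (2), I first apply (1) to multiply $\mathcal{E}$ by $g_\lambda(x)^{-1}$ and reduce to the normal form $E_\lambda = y f_\lambda + x^w$. A strata-preserving automorphism $\varphi$ has the form $X := \varphi(x) = a(x) + y b(x, y)$ with $a$ a local automorphism of $\mathbb{C}[[x]]$, and $Y := \varphi(y) = y c(x, y)$ with $c$ a unit. Then $\varphi(E_\lambda) = Y f_\lambda(X, Y) + X^w$, whose $w$-contact decomposition has $\tilde{g}_\lambda^{\mathrm{new}}(x) = (a(x)/x)^w$, independent of $\lambda$, together with $\tilde{f}_\lambda^{\mathrm{new}}(x,y) = c f_\lambda(X, Y) + b\bigl(X^{w-1} + X^{w-2} a + \cdots + a^{w-1}\bigr)$. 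Only the first summand depends on $\lambda$, so
\[
\tilde{\Phi}_{\varphi(\mathcal{E})}(\bv) = c \cdot (x/a)^w \cdot \varphi\bigl(\tilde{\Phi}_{\mathcal{E}}(\bv)\bigr),
\]
and since $c(x/a)^w$ is a unit in $\mathcal{O}$ and $\varphi$ induces an isomorphism $\mathcal{O}/I \cong \mathcal{O}/\varphi(I)$, surjectivity is preserved.

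The main obstacle is the bookkeeping of how the ratio $f_\lambda/g_\lambda$ transforms under the two operations. The essential simplification is the congruence between $\Delta_{\mathcal{E},I}$ and $\Phi_{\mathcal{E},I}$ modulo $I$ used in (1): it converts what naively looks like a linear combination of $\Phi$ and $\Delta$ into a pure scaling by a unit, after which both parts reduce to short formal manipulations.
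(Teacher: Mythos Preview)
Your proof is correct and follows essentially the same approach as the paper: the same factorization $u_\lambda(x,0)\cdot(1+yh_\lambda)$ in (1), the same reduction to normal form and direct computation of the transformed $\tilde f_\lambda/\tilde g_\lambda$ in (2), leading to the identical conclusion $\Phi_{\tilde{\mathcal{E}},I}=(1+yh_0)\Phi_{\mathcal{E},I}$ and $\tilde\Phi_{\varphi(\mathcal{E})}(\bv)=c\,(x/a)^w\,\varphi(\tilde\Phi_{\mathcal{E}}(\bv))$. The only cosmetic differences are that you package the key cancellation in (1) via the auxiliary congruence $\bv(E_\lambda)|_0\equiv yg_0\,\tilde\Phi_{\mathcal{E}}(\bv)\pmod I$ and in (2) use the telescoping factorization $X^w-a^w=(X-a)(X^{w-1}+\cdots+a^{w-1})$ where the paper uses the binomial expansion.
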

\begin{proof}
(1)
We write $u_\lambda(x, y)$ as 
$u_\lambda(x, y)=\bar{u}_\lambda(x)\{1+yh_\lambda(x, y)\}$. 
It is obvious that the map $\Phi_{\mathcal{E}, I}$ 
does not change under $E_{\lambda}(x, y)\leadsto\bar{u}_\lambda(x)E_{\lambda}(x, y)$, 
so we may assume that 
$u_\lambda(x, y)=1+yh_\lambda(x, y)$. 

If $E_{\lambda}(x, y)=y f_\lambda(x, y) + x^w g_\lambda(x)$, 
then 
\begin{eqnarray*}
u_\lambda(x, y)E_{\lambda}(x, y)
& = & 
\{1+yh_\lambda(x, y)\} \{y f_\lambda(x, y) + x^w g_\lambda(x)\} \\
& = & 
y \{(1 + yh_\lambda(x, y))f_\lambda(x, y)+x^w g_\lambda(x)h_\lambda(x, y)\} 
+ x^w g_\lambda(x). 
\end{eqnarray*}
Thus $\tilde{\Phi}_{\{u_\lambda E_{\lambda}\}}$ sends $\bv$ to 
\begin{eqnarray*}
& & 
\bv\left(
\frac{(1 + yh_\lambda(x, y))f_\lambda(x, y)+x^w g_\lambda(x)h_\lambda(x, y)}{g_\lambda(x)}
\right) \\
& = &
\bv\left(
(1 + yh_\lambda(x, y))\frac{f_\lambda(x, y)}{g_\lambda(x)} + x^w h_\lambda(x, y)
\right) \\
& = &
\bv(1 + yh_\lambda(x, y))\frac{f_0(x, y)}{g_0(x)}
+
(1 + yh_0(x, y))
\bv\left(
\frac{f_\lambda(x, y)}{g_\lambda(x)}
\right)
+
x^w 
\bv\left(
h_\lambda(x, y)
\right) \\
& = &
\bv(h_\lambda(x, y))\left(y\frac{f_0(x, y)}{g_0(x)} + x^w\right)
+
(1 + yh_0(x, y))
\bv\left(
\frac{f_\lambda(x, y)}{g_\lambda(x)}
\right) \\
& \equiv &
(1 + yh_0(x, y))
\bv\left(
\frac{f_\lambda(x, y)}{g_\lambda(x)}
\right) \mod I, 
\end{eqnarray*}
by 
$yf_0(x, y) + x^w g_0(x)\in I$. 
Since $1 + yh_0(x, y)$ is a unit, 
$\Phi_{\{u_\lambda E_{\lambda}\}, I}$ is surjective if and only if 
$\Phi_{\{E_{\lambda}\}, I}$ is. 

\medskip
(2)
Let $X:=\varphi(x), Y:=\varphi(y)$ and write 
$X=x\cdot u(x)+y\cdot A(x, y)$ and $Y=y\cdot v(x, y)$, 
where $u(x)$ and $v(x, y)$ are units. 
Since multiplying a unit does not affect the surjectivity on the both sides by (1), 
we may assume that $g_\lambda(x)=1$. 
Then 
\begin{eqnarray*}
& & \varphi(yf_\lambda(x, y)+x^w) \\ 
& = & Y f_\lambda(X, Y)+X^w \\
& = & yv(x, y)f_\lambda(X, Y)+(xu(x)+yA(x, y))^w \\
& = & 
y\left\{v(x, y)f_\lambda(X, Y)  + 
\sum_{i=1}^w \begin{pmatrix} w \\ i \end{pmatrix} x^{w-i}u(x)^{w-i}y^{i-1}A(x, y)^i
\right\}
+ x^w u(x)^w. 
\end{eqnarray*}
Thus $\tilde{\Phi}_{\varphi(\mathcal{E})}$ sends $\bv$ to 
\begin{eqnarray*}
& & 
\bv\left(
u(x)^{-w}v(x, y)f_\lambda(X, Y) + 
\sum_{i=1}^w \begin{pmatrix} w \\ i \end{pmatrix} 
x^{w-i}u(x)^{-i}y^{i-1}A(x, y)^i
\right) \\
& = &
u(x)^{-w}v(x, y)\bv(f_\lambda(X, Y)) \\
& = &
u(x)^{-w}v(x, y)\varphi(\bv(f_\lambda(x, y))), 
\end{eqnarray*}
and $\Phi_{\varphi(\mathcal{E}), \varphi(I)}$ is surjective 
if and only if $\Phi_{\mathcal{E}, I}$ is. 
\end{proof}

As we will see in Theorem \ref{thm_corr}, 
this condition ($\ast$) is a sufficient condition for 
our relative Hilbert scheme to have symplectic singularities. 
We can somewhat relax the condition 
by using families of automorphisms which depend on the parameter $\lambda$, 
although we do not need this for the proof of Theorem \ref{thm_main}. 
For simplicity, we only consider equations in the normal form. 

\begin{proposition}\label{prop_coord_change}
Let $\Lambda$ be the formal spectrum of an object of $\hat{\mathcal{A}}_{\mathbb{C}}$ 
which is regular. 
Let $\mathcal{E}=\{E_{\lambda}\}_{\lambda\in \Lambda}$ 
be a family of $w$-contact equations over $\Lambda$ 
in the normal form 
\[
E_{\lambda}(x, y)=y f_\lambda(x, y) + x^w. 
\]
Then the following conditions are equivalent. 
\begin{enumerate}
\item[(a)]
There exists a family 
$\tilde{\varphi}: \mathcal{O}_{\hat{\mathbb{A}}^2\times \Lambda}
\to \mathcal{O}_{\hat{\mathbb{A}}^2\times \Lambda}$ of strata-preserving automorphisms 
of $\mathcal{O}_{\hat{\mathbb{A}}^2}$ over $\Lambda$ 
such that $\tilde{\varphi}(\mathcal{E})$ and $\tilde{\varphi}_0(I)$ satisfy ($\ast$). 
\item[(b)]
The map 
\[
T_0\Lambda \to 
\mathcal{O}\left/\left(
I 
+ 
\left\langle 
 x\frac{\partial f_0(x, y)}{\partial x} - wf_0(x, y), 
\frac{\partial E_0(x, y)}{\partial x}, 
\frac{\partial E_0(x, y)}{\partial y}
\right\rangle
\right)
\right.
\]
induced by $\Phi_{\mathcal{E}, I}$ is surjective. 
\end{enumerate}
\end{proposition}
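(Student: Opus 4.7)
The strategy is to reduce condition (a) to an explicit first-order formula for $\tilde{\Phi}$ after applying a family of automorphisms, and then match it with (b) via one key algebraic identity.

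First I would reduce to $\tilde{\varphi}_0=\mathrm{id}$. Any family factors as $\tilde{\varphi}=\tilde{\varphi}_0\circ\psi$ with $\psi_0=\mathrm{id}$, and by Proposition \ref{prop_invariance}(2) the pair $(\tilde{\varphi}(\mathcal{E}),\tilde{\varphi}_0(I))$ satisfies ($\ast$) if and only if $(\psi(\mathcal{E}),I)$ does. A family of strata-preserving automorphisms with $\psi_0=\mathrm{id}$ has the form $\psi(x)=xu_\lambda(x)+y\tilde{A}_\lambda(x,y)$, $\psi(y)=yv_\lambda(x,y)$ with $u_0=1$, $\tilde{A}_0=0$, $v_0=1$. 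The first-order data of such a $\psi$ at $\bv\in T_0\Lambda$ is captured by three linear maps that can be prescribed arbitrarily and independently,
\[
\alpha(\bv)=\bv(u_\lambda)|_{\lambda=0}\in\mathbb{C}[[x]],\quad
\beta(\bv)=\bv(\tilde{A}_\lambda)|_{\lambda=0}\in\mathcal{O},\quad
\gamma(\bv)=\bv(v_\lambda)|_{\lambda=0}\in\mathcal{O}.
\]

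Next I would differentiate $\psi(E_\lambda)=(y+\eta_\lambda)f_\lambda(x+\xi_\lambda,y+\eta_\lambda)+(x+\xi_\lambda)^w$ along $\bv$ at $\lambda=0$, decompose the result as $y\cdot(\cdots)+x^w\cdot(\cdots)$ in the sense of \eqref{eqn_w_contact} to extract $\bv(\tilde{f}_\lambda)|_0$ and $\bv(\tilde{g}_\lambda)|_0$, and then form $\bv(\tilde{f}_\lambda/\tilde{g}_\lambda)|_0$. After substituting $\xi'=x\alpha(\bv)+y\beta(\bv)$ and $\eta'=y\gamma(\bv)$ and recognizing $\partial E_0/\partial x=yf_0^x+wx^{w-1}$ and $\partial E_0/\partial y=f_0+yf_0^y$, the terms assemble into
\[
\tilde{\Phi}_{\psi(\mathcal{E})}(\bv) \;=\; \tilde{\Phi}_{\mathcal{E}}(\bv) + \gamma(\bv)\frac{\partial E_0}{\partial y} + \beta(\bv)\frac{\partial E_0}{\partial x} + \alpha(\bv)\bigl(xf_0^x - wf_0\bigr).
\]

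The crucial algebraic input, derived directly from $E_0=yf_0+x^w$, is the identity
\[
y\bigl(xf_0^x - wf_0\bigr) \;=\; x\frac{\partial E_0}{\partial x} - wE_0.
\]
Since $E_0\in I$, multiplication by $y$ carries $xf_0^x-wf_0$ into $\mathcal{O}\cdot\partial E_0/\partial x$ modulo $I$, so the $\mathcal{O}$-submodule of $\mathcal{O}/I$ described by the three correction terms in the displayed formula coincides with the full ideal $K:=\langle xf_0^x-wf_0,\partial E_0/\partial x,\partial E_0/\partial y\rangle$ modulo $I$. Consequently the image of $\Phi_{\psi(\mathcal{E}),I}$, as $\psi$ varies, lies in $\Phi_{\mathcal{E},I}(T_0\Lambda)+K$ and covers that whole sum by the free choice of $(\alpha,\beta,\gamma)$; surjectivity onto $\mathcal{O}/I$ for some $\psi$ is then equivalent to surjectivity of the induced map onto $\mathcal{O}/(I+K)$, which is (b). The main obstacle is the bookkeeping in the second step, where one must track that each of the three free parameters $\alpha,\beta,\gamma$ couples to exactly the expected correction direction; once the partial derivatives of $E_0$ are spotted in the expansion, the identity above closes the argument.
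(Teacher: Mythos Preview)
Your proof plan follows essentially the same route as the paper: reduce to $\psi_0=\mathrm{id}$ via Proposition~\ref{prop_invariance}(2), compute $\tilde{\Phi}_{\psi(\mathcal{E})}(\bv)$ to obtain the three correction terms $\alpha(\bv)(xf_0^x-wf_0)$, $\beta(\bv)\,\partial E_0/\partial x$, $\gamma(\bv)\,\partial E_0/\partial y$, and then use the identity $y(xf_0^x-wf_0)=x\,\partial E_0/\partial x - wE_0$ together with $E_0\in I$ to upgrade the $\mathbb{C}[[x]]$-span of the first correction to an $\mathcal{O}$-ideal modulo $I$. The paper carries out the differentiation in full and, for the direction (b)$\Rightarrow$(a), explicitly constructs $\tilde{\varphi}$ by a generic-scalar argument rather than appealing to the linear-algebra fact you invoke, but the substance is identical.
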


\begin{proof}
By Proposition \ref{prop_invariance}, 
we have only to consider automorphisms which restrict to the identity at $\lambda=0$. 
In other words, for $X:=\tilde{\varphi}(x), Y:=\tilde{\varphi}(y)$, 
we assume 
$X=x\cdot u_\lambda(x)+y\cdot A_\lambda(x, y)$ and $Y=y\cdot v_\lambda(x, y)$, 
where $u_0(x)=1$, $A_0(x, y)=0$ and $v_0(x, y)=1$. 

As in the proof of Proposition \ref{prop_invariance}, 
$\tilde{\varphi}(yf_\lambda(x, y)+x^w)$ is 
\[
y\left\{f_\lambda(X, Y)v_\lambda(x, y)  + 
\sum_{i=1}^w \begin{pmatrix} w \\ i \end{pmatrix} x^{w-i}u_\lambda(x)^{w-i}y^{i-1}A_\lambda(x, y)^i
\right\}
+ x^w u_\lambda(x)^w, 
\]
and $\tilde{\Phi}_{\tilde{\varphi}(\mathcal{E})}$ sends $\bv$ to 
\begin{eqnarray*}
& & 
\bv\left(
f_\lambda(X, Y)u_\lambda(x)^{-w}v_\lambda(x, y) + 
\sum_{i=1}^w \begin{pmatrix} w \\ i \end{pmatrix} 
x^{w-i}u_\lambda(x)^{-i}y^{i-1}A_\lambda(x, y)^i
\right) \\
& = &
\bv(f_\lambda(X, Y))\cdot u_0(x)^{-w}v_0(x, y) \\
& & 
+ (-w)f_0(x, y)\cdot\bv(u_\lambda(x))\cdot(u_0(x))^{-w-1}v_0(x, y) \\
& & 
+ f_0(x, y)u_0(x)^{-w}\cdot\bv(v_\lambda(x, y))  \\
& &
+ \sum_{i=1}^w \begin{pmatrix} w \\ i \end{pmatrix} 
x^{w-i}\bv(u_\lambda(x))\cdot(-i)(u_\lambda(x))^{-i-1}y^{i-1}A_0(x, y)^i \\
& & 
+ \sum_{i=1}^w \begin{pmatrix} w \\ i \end{pmatrix} 
x^{w-i}u_0(x)^{-i}y^{i-1}\bv(A_\lambda(x, y))\cdot i A_0(x, y)^{i-1} \\
& = &
\bv(f_\lambda(x, y)) 
+ \bv(X)\frac{\partial f_0(x, y)}{\partial x}
+ \bv(Y)\frac{\partial f_0(x, y)}{\partial y}
\\
& & 
+ (-w)f_0(x, y)\bv(u_\lambda(x)) 
+ f_0(x, y)\bv(v_\lambda(x, y)) 
+ w x^{w-1}\bv(A_\lambda(x, y)) \\
& = &
\bv(f_\lambda(x, y)) \\
& & 
+ \left(x\frac{\partial f_0(x, y)}{\partial x} -wf_0(x, y) \right) \bv(u_\lambda(x))
+ \left(y\frac{\partial f_0(x, y)}{\partial x} + w x^{w-1} \right) \bv(A_\lambda(x, y)) \\ 
& & 
+ \left(y \frac{\partial f_0(x, y)}{\partial y} + f_0(x, y)\right)\bv(v_\lambda(x, y)) \\ 
& = &
\tilde{\Phi}_{\mathcal{E}}(\bv) 
+ \left(x\frac{\partial f_0(x, y)}{\partial x} -wf_0(x, y) \right) \bv(u_\lambda(x)) \\
& & 
+ \frac{\partial E_0(x, y)}{\partial x}\bv(A_\lambda(x, y)) 
+ \frac{\partial E_0(x, y)}{\partial y} \bv(v_\lambda(x, y)). 
\end{eqnarray*}
The sum of the last three terms exhausts 
\[
J:= 
\left(x\frac{\partial f_0(x, y)}{\partial x} -wf_0(x, y) \right)\mathbb{C}[[x]]
+ 
\frac{\partial E_0(x, y)}{\partial x}\mathcal{O} 
+
\frac{\partial E_0(x, y)}{\partial y}\mathcal{O} 
\]
when $u_\lambda(x)$, $A_\lambda(x, y)$ and $v_\lambda(x, y)$ vary. 
We see that $E_0\mathcal{O} + J$ 
is an ideal in $\mathcal{O}$ from 
\begin{equation}
y\left( x\frac{\partial f_0(x, y)}{\partial x} - wf_0(x, y) \right)
= 
x\frac{\partial E_0(x, y)}{\partial x} - wE_0(x, y). 
\label{eqn_relation}
\end{equation}
Thus $I+J$ is an ideal, equal to 
\[
I 
+ \left\langle 
x\frac{\partial f_0(x, y)}{\partial x} -wf_0(x, y), 
\frac{\partial E_0(x, y)}{\partial x}, 
\frac{\partial E_0(x, y)}{\partial y}
\right\rangle. 
\]

If $\tilde{\varphi}(\mathcal{E})$ and $I$ satisfy ($\ast$), 
i.\,e. $\Phi_{\tilde{\varphi}(\mathcal{E}), I}$ is surjective, 
it follows that 
$\mathrm{Im}\,\tilde{\Phi}_{\mathcal{E}} + I+J = \mathcal{O}$. 

Conversely, assume that $\mathrm{Im}\,\tilde{\Phi}_{\mathcal{E}} + I+J = \mathcal{O}$. 
Let $d$ be the length of $\mathcal{O}/I$. 
Then 
there exist a nonnegative integer $k$, 
a formal coordinate system $\lambda_1, \dots, \lambda_r$  on $\Lambda$ 
and $u^{(j)}(x)\in\mathbb{C}[[x]]$, 
$A^{(j)}(x, y)\in\mathcal{O}$ and $v^{(j)}(x, y)\in\mathcal{O}$ 
for $j=1, \dots, k$ such that 
the images of 
\[
\left(x\frac{\partial f_0(x, y)}{\partial x} - wf_0(x, y) \right)u^{(j)}(x)
+ 
\frac{\partial E_0(x, y)}{\partial x}A^{(j)}(x, y) 
+ 
\frac{\partial E_0(x, y)}{\partial y}v^{(j)}(x, y) 
\]
for $j=1, \dots, k$ 
and 
$\Phi_{\mathcal{E}, I}(\partial/\partial \lambda_j)$ for $j=k+1, \dots, d$ 
span $\mathcal{O}/I$. 
Then, for a general $c\in\mathbb{C}$, 
the map 
$\Phi_{\tilde{\varphi}(\mathcal{E}), I}$ is surjective 
for the automorphism $\tilde{\varphi}$ given by 
$u_\lambda(x)=1+c\sum_{j=1}^k\lambda_j u^{(j)}(x)$, 
$A_\lambda(x, y)=1+c\sum_{j=1}^k\lambda_j A^{(j)}(x, y)$ 
and $v_\lambda(x, y)=1+c\sum_{j=1}^k\lambda_j v^{(j)}(x, y)$. 
\end{proof}

\begin{remark}\label{rmk_rephrase}
Let 
$\mathcal{T}:=\langle x, y\rangle \partial_x + \langle y\rangle \partial_y$ 
be the module of tangent fields fixing $(y=0)$ and $(x=y=0)$. 
By \eqref{eqn_relation}, the following gives a well-defined homomorphism 
$\Psi_{E_0, I}: \mathcal{T}\to\mathcal{O}/I$ of $\mathcal{O}$-modules: for $\xi_1, \xi_2, \eta\in\mathcal{O}$, 
\begin{eqnarray*}
D=(x\xi_1+y\xi_2)\partial_x + y\eta \partial_y
& \mapsto & 
\xi_1\left( x \frac{\partial f_0}{\partial x} - wf_0 \right)  + 
           \xi_2 \frac{\partial E_0}{\partial x}  + 
           \eta \frac{\partial E_0}{\partial y}  + I. 
\end{eqnarray*}
The condition of Proposition \ref{prop_coord_change} 
can be rephrased 
as the surjectivity of the map 
\[
\Phi_{\mathcal{E}, I} + \Psi_{E_0, I}: 
T_{0}\Lambda \times \mathcal{T} \to \mathcal{O}/I; 
\ (\bv, D) \mapsto 
\Phi_{\mathcal{E}, I}(\bv) + \Psi_{E_0, I}(D). 
\]
We may also take 
$T_0\Lambda \times \mathcal{T}/I\mathcal{T}$
as the domain of the map. 

To some extent, this condition is analogous to 
the criterion for the nonsingularity 
of $\mathrm{Hilb}^d(\mathcal{C}/\Lambda)$ 
at $[I]\in\mathrm{Hilb}^d(C_0)$ 
given at the beginning of \cite[\S4]{Shende2012}: 
under the usual identification 
$T_{[I]}\mathrm{Hilb}^d(\hat{\mathbb{A}}^2)\cong 
\mathrm{Hom}_{\mathcal{O}}(I, \mathcal{O}/I)$, 
$[I]\in\mathrm{Hilb}^d(\mathcal{C}/\Lambda)$ is nonsingular 
if and only if the map 
\[
T_0\Lambda\times T_{[I]}\mathrm{Hilb}^d(\hat{\mathbb{A}}^2)\to\mathcal{O}/I; 
\ (\bv, \eta)\mapsto\eta(E_0)-(\bv(E_\lambda)+I)
\]
is surjective, 
where $\{E_\lambda\}$ is a defining equation for $\mathcal{C}$. 
\end{remark}

\begin{remark}
Let $R=\mathcal{O}/\langle E_0\rangle$, $\tilde{R}$ its normalization, 
$C\subseteq R$ the conductor 
and $J\subseteq \mathcal{O}$ the Jacobian ideal. 
By \cite[p. 261]{Piene1978}, 
$J\cdot \tilde{R}=(C\cdot\tilde{R})\cdot \mathrm{Fitt}^0(\Omega_{\tilde{R}/R})
\subset C\cdot \tilde{R}=C$, 
and hence 
$J\cdot R\subseteq C$ holds. 
Thus, if $I\cdot R$ contains the conductor ideal, 
then 
\[
I+ 
\left\langle 
 x\frac{\partial f_0(x, y)}{\partial x} - wf_0(x, y), 
\frac{\partial E_0(x, y)}{\partial x}, 
\frac{\partial E_0(x, y)}{\partial y}
\right\rangle
= I + \left\langle 
 x\frac{\partial f_0(x, y)}{\partial x} - wf_0(x, y) 
\right\rangle
\]
holds. 
In many examples 
we also have $x{\partial f_0(x, y)}/{\partial x} - wf_0(x, y)\in C$, 
but the author does not know if this is always the case. 
\end{remark}

\subsection{Correspondence of Hilbert schemes}

Let us start with the morphisms between formal Hilbert schemes 
associated to a smooth morphism. 

\begin{definition}
For a (formal) scheme $X$ over $\mathbb{C}$ 
and a $0$-dimensional subscheme $Z$ of $X$, 
let $u^X_Z: \mathcal{U}^X_Z\to\mathcal{H}^X_Z$ 
be the universal family over the formal Hilbert scheme, 
and $o^X_Z: \mathbb{O}^X_Z\to \mathcal{H}^X_Z$ 
the $\mathbb{A}^d$-bundle associated to $(u^X_Z)_*\mathcal{O}_{\mathcal{U}^X_Z}$ 
in the covariant way, 
where $d$ is the length of $Z$. 
\end{definition}

\begin{definition}
Let $\pi: \tilde{X}\to X$ be a morphism. 
If $Z\subseteq X$ is a closed subscheme, 
a \emph{lift} of $Z$ is a closed subscheme $\tilde{Z}\subset \tilde{X}$
such that $\pi$ induces an isomorphism $\tilde{Z}\overset{\sim}\to Z$. 
We use the same term for the corresponding ideals. 
We can also consider lifts of flat families of closed subschemes of $X$. 
\end{definition}

\begin{proposition}\label{prop_hilb_and_smooth_morphism}
Let $\pi: \tilde{X}\to X$ be a morphism 
between formal schemes which are formally of finite type over $\mathbb{C}$, 
$Z$ a closed subscheme of $X$ of finite length. 
\begin{enumerate}
\item
Let $\tilde{Z}\subseteq \tilde{X}$ be a lift of $Z$. 
Then there is a morphism 
$\pi_{\tilde{Z}}: \mathcal{H}^{\tilde{X}}_{\tilde{Z}} \to \mathcal{H}^X_Z$ 
characterized by the following property: 
if $T$ is the spectrum of an object of $\mathcal{A}_\mathbb{C}$, 
$\tilde{\mathcal{Z}}\subseteq \tilde{X}\times T$ is a family of subschemes of $\tilde{X}$ 
corresponding to $f: T\to \mathcal{H}^{\tilde{X}}_{\tilde{Z}}$
and $\mathcal{Z}\subseteq X\times T$ is the family of subschemes of $X$ 
induced by the morphism $\pi_{\tilde{Z}}\circ f$, 
then $\pi\times id_T: \tilde{X}\times T\to X\times T$ 
induces an isomorphism $\tilde{\mathcal{Z}}\to\mathcal{Z}$. 
\item
Let $\tilde{X}$ be $X\times \mathbb{A}^1$ and 
$\pi: \tilde{X}\to X$ the projection. 
Then lifts of $Z$ are in one-to-one correspondence 
with the $\mathbb{C}$-valued points of $\mathbb{O}^X_Z$. 
If $\tilde{Z}$ is a lift of $Z$ and $P_{\tilde{Z}}\in \mathbb{O}^X_Z$ 
is the corresponding point, 
$\mathcal{H}^{\tilde{X}}_{\tilde{Z}}$ is isomorphic 
to $(\mathbb{O}^X_Z)^\wedge_{P_{\tilde{Z}}}$, 
the completion of $\mathbb{O}^X_Z$ at $P_{\tilde{Z}}$, 
in a natural way. 
\end{enumerate}
\end{proposition}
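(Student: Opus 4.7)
The plan is to work at the level of local Hilbert functors throughout: for (1) I would construct $\pi_{\tilde{Z}}$ as a natural transformation $H^{\tilde{X}}_{\tilde{Z}} \to H^X_Z$, and for (2) I would identify $H^{\tilde{X}}_{\tilde{Z}}$ with the functor pro-represented by $(\mathbb{O}^X_Z)^\wedge_{P_{\tilde{Z}}}$.

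For (1), given $T = \mathrm{Spec}\,A$ with $A \in \mathcal{A}_{\mathbb{C}}$ and a flat family $\tilde{\mathcal{Z}} \subseteq \tilde{X} \times T$ with $\tilde{\mathcal{Z}}_0 = \tilde{Z}$, I would take $\mathcal{Z}$ to be the scheme-theoretic image $(\pi \times \mathrm{id}_T)(\tilde{\mathcal{Z}}) \subseteq X \times T$ and check that this defines a point of $H^X_Z(T)$. The verification is local at the finitely many points of $\mathrm{Supp}(\tilde{Z})$: at each $P \in \tilde{\mathcal{Z}}$, the map $\mathcal{O}_{X \times T, \pi(P)} \to \mathcal{O}_{\tilde{\mathcal{Z}}, P}$ is a map of finite $A$-modules whose reduction modulo $\mathfrak{m}_A$ is the isomorphism $\mathcal{O}_{Z, \pi(P)} \to \mathcal{O}_{\tilde{Z}, P}$, so Nakayama gives surjectivity; the image $\mathcal{O}_{\mathcal{Z}} \cong \mathcal{O}_{\tilde{\mathcal{Z}}}$ is then $A$-flat with central fibre $Z$, so $[\mathcal{Z}] \in H^X_Z(T)$. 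Functoriality of $\tilde{\mathcal{Z}} \mapsto \mathcal{Z}$ in $T$ together with pro-representability yield the desired morphism $\pi_{\tilde{Z}}$, with the stated characterizing property built into the construction.

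For (2), lifts of $Z$ to $X \times \mathbb{A}^1$ are graphs of sections $\sigma \in H^0(Z, \mathcal{O}_Z)$, which by the construction of $\mathbb{O}^X_Z$ as the affine bundle associated to $(u^X_Z)_* \mathcal{O}_{\mathcal{U}^X_Z}$ are exactly the $\mathbb{C}$-points of $\mathbb{O}^X_Z$ lying over $[Z] \in \mathcal{H}^X_Z$; this gives the first claim along with the distinguished point $P_{\tilde{Z}}$. For the isomorphism $\mathcal{H}^{\tilde{X}}_{\tilde{Z}} \cong (\mathbb{O}^X_Z)^\wedge_{P_{\tilde{Z}}}$ I would exhibit a bijection on $T$-points: given $\tilde{\mathcal{Z}} \in H^{\tilde{X}}_{\tilde{Z}}(T)$, part (1) provides a deformation $\mathcal{Z} \subseteq X \times T$ of $Z$, and restricting the $\mathbb{A}^1$-coordinate along $\tilde{\mathcal{Z}} \cong \mathcal{Z}$ yields a section $\sigma \in \Gamma(\mathcal{Z}, \mathcal{O}_{\mathcal{Z}})$; conversely, any pair $(\mathcal{Z}, \sigma)$ recovers $\tilde{\mathcal{Z}}$ as the graph of $\sigma$. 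By the universal property of $\mathbb{O}^X_Z$, such pairs are exactly $T$-points of $\mathbb{O}^X_Z$, and the condition $\tilde{\mathcal{Z}}_0 = \tilde{Z}$ selects those reducing to $P_{\tilde{Z}}$ at the closed point.

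The main technical point is the Nakayama step in (1): it uses crucially that $\tilde{\mathcal{Z}}$ is finite over $T$ (automatic, being a deformation of a length-$d$ scheme), that the induced map on central fibres is an isomorphism (the lift hypothesis), and that $A$ is artinian local. Once this is established, the passage to pro-representable hulls is automatic, and the rest of (2) is a direct unravelling of the universal property of the affine bundle $\mathbb{O}^X_Z$ combined with the graph construction.
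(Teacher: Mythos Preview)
Your proposal is correct and follows essentially the same route as the paper: construct $\pi_{\tilde{Z}}$ by pushing $\tilde{\mathcal{Z}}$ forward along $\pi\times\mathrm{id}_T$ and invoking Nakayama to see that $\tilde{\mathcal{Z}}\to X\times T$ is a closed immersion, then in (2) pass between lifts and sections of $\mathcal{O}_{\mathcal{Z}}$ via the graph/$z$-coordinate construction and the universal property of $\mathbb{O}^X_Z$. One small slip in phrasing: in your Nakayama step the source $\mathcal{O}_{X\times T,\pi(P)}$ is not a finite $A$-module, and its reduction modulo $\mathfrak{m}_A$ is $\mathcal{O}_{X,\pi(P)}$ rather than $\mathcal{O}_{Z,\pi(P)}$; but since only the \emph{target} needs to be finite over $A$ and the reduced map is surjective (factoring through the isomorphism $\mathcal{O}_{Z,\pi(P)}\xrightarrow{\sim}\mathcal{O}_{\tilde{Z},P}$), the conclusion is unaffected.
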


\begin{proof}
(1) 
Let $\tilde{\mathcal{Z}}\subseteq \tilde{X}\times T$ be 
as in the statement. 
Since its central fiber $\tilde{Z}$ is a lift of $Z$, 
the homomorphism $\mathcal{O}_{X}\to \mathcal{O}_{\tilde{Z}}$ 
is surjective. 
By Nakayama's lemma, 
$\mathcal{O}_{X\times T}\to \mathcal{O}_{\tilde{\mathcal{Z}}}$ 
is surjective, 
i.\,e. $\tilde{\mathcal{Z}}\to X\times T$ is a closed immersion, 
and $\tilde{\mathcal{Z}}$ is a lift of the family $\mathcal{Z}\subseteq X\times T$ 
of closed subschemes of $X$ defined by the kernel of 
$\mathcal{O}_{X\times T}\to \mathcal{O}_{\tilde{\mathcal{Z}}}$. 
This assignment $\tilde{\mathcal{Z}}\mapsto\mathcal{Z}$ is functorial, 
so we have a morphism 
$\pi_{\tilde{Z}}: \mathcal{H}^{\tilde{X}}_{\tilde{Z}} \to \mathcal{H}^X_Z$ 
as stated. 

\medskip
(2)
Let $z$ be the coordinate function on $\mathbb{A}^1$. 
If $\tilde{Z}$ is a lift of $Z$, 
then $z|_{\tilde{Z}}$ can be regarded as a section of 
$\mathcal{O}_Z$, hence as a $\mathbb{C}$-valued point $P_{\tilde{Z}}$ 
of $\mathbb{O}^X_Z$. 

If $\tilde{\mathcal{Z}}\subseteq \tilde{X}\times T$ is a family 
in $\mathcal{H}^{\tilde{X}}_{\tilde{Z}}$, 
then it lifts a family $\mathcal{Z}\subseteq X\times T$ by (1), 
and $\bar{s}:=z|_{\tilde{\mathcal{Z}}}$ 
can be regarded as a section of $\mathcal{O}_{\mathcal{Z}}$. 
We take $s\in\mathcal{O}_{X\times T}$ 
such that $s|_{\mathcal{Z}}=\bar{s}$, 
and then the ideal of $\tilde{\mathcal{Z}}$ is generated by 
$I_{\mathcal{Z}\subseteq X\times T}$ and $z-s$. 
The restriction of $\bar{s}$ to the central fiber $Z$ of $\mathcal{Z}$ 
corresponds to $P_{\tilde{Z}}$, 
and so $\bar{s}$ gives a map $T\to (\mathbb{O}^X_Z)^\wedge_{P_{\tilde{Z}}}$. 
Conversely, if a morphism $T\to (\mathbb{O}^X_Z)^\wedge_{P_{\tilde{Z}}}$ is given, 
then it defines a family 
$u: \mathcal{Z}\hookrightarrow X\times T\to T$ of closed subschemes of $X$ 
and a section of $u_*\mathcal{O}_{\mathcal{Z}}$ 
whose restriction to the closed point of $T$ corresponds to $\tilde{Z}$. 
The latter gives a morphism $\mathcal{Z}\to \mathbb{A}^1$, 
giving rise to a closed embedding $\mathcal{Z}\hookrightarrow \tilde{X}\times T$ 
which is a lift of $\mathcal{Z}\hookrightarrow X\times T$ 
and extends $\tilde{Z}\hookrightarrow \tilde{X}$. 

These correspondences are inverse to each other, 
and so we have an isomorphism 
$\mathcal{H}^{\tilde{X}}_{\tilde{Z}} \cong (\mathbb{O}^X_Z)^\wedge_{P_{\tilde{Z}}}$. 
\end{proof}

Let $\mathcal{E}=\{E_{\lambda}\}_{\lambda\in\Lambda}$ 
be a family of elements of $\mathcal{O}=\mathbb{C}[[x, y]]$ 
parametrized by $\Lambda$, 
and let $\mathcal{C}\to\Lambda$ be the associated family of curves. 
We have natural morphisms 
\[
\mathrm{Hilb}^d(\mathcal{C}/\Lambda)
\hookrightarrow \mathrm{Hilb}^d(\hat{\mathbb{A}}^2_{\Lambda}/\Lambda)
\cong \mathrm{Hilb}^d(\hat{\mathbb{A}}^2)\times\Lambda
\to \mathrm{Hilb}^d(\hat{\mathbb{A}}^2)
\]
and 
$\mathrm{Hilb}^d(\hat{\mathbb{A}}^3_{\Lambda}/\Lambda)
\to \mathrm{Hilb}^d(\hat{\mathbb{A}}^3)$. 

\begin{definition}
Let $\mathcal{E}=\{E_{\lambda}\}_{\lambda\in\Lambda}$ 
be a family of $w$-contact equations over $\Lambda$, 
with $E_{\lambda}(x, y)=yf_\lambda(x, y)+w^w g_\lambda(x)$. 

We define $\mathcal{S}_{\mathcal{E}}$  
to be the closed formal subscheme of $\hat{\mathbb{A}}^3_{\Lambda}$ 
given by 
$\langle z-g_\lambda(x)^{-1}f_\lambda(x, y)\rangle
=\langle f_\lambda(x, y) - zg_\lambda(x)\rangle$. 
Here, we take the completion of $\mathbb{A}^3$ 
at $x=y=0, z=g_0(0)^{-1}f_0(0, 0)$. 
The isomorphism 
$\mathcal{S}_{\mathcal{E}}\to \hat{\mathbb{A}}^2_{\Lambda}$ defined by the projection 
induces an isomorphism 
\[
\mathrm{Hilb}^d(\mathcal{S}_{\mathcal{E}}/\Lambda)
\to \mathrm{Hilb}^d(\hat{\mathbb{A}}^2_{\Lambda}/\Lambda). 
\]
By composing the inverse of this isomorphism 
with the inclusion into $\mathrm{Hilb}^d(\hat{\mathbb{A}}^3_{\Lambda}/\Lambda)$ 
and the projection to $\mathrm{Hilb}^d(\hat{\mathbb{A}}^3)$, 
we obtain a morphism 
\[
L_{\mathcal{E}}^d: \mathrm{Hilb}^d(\hat{\mathbb{A}}^2_{\Lambda}/\Lambda)
\to \mathrm{Hilb}^d(\hat{\mathbb{A}}^3). 
\]
If $I\subset\mathcal{O}$ is an ideal of colength $d$ 
and $[\tilde{I}]=L_{\mathcal{E}}^d([I])$, 
we write 
\[
L_{\mathcal{E}, I}: 
\FormalHilb{\hat{\mathbb{A}}^2_{\Lambda}/\Lambda}{I}
\to \FormalHilb{\hat{\mathbb{A}}^3}{\tilde{I}} 
\]
for the morphism between the completions. 
This is a morphism over $\FormalHilb{\hat{\mathbb{A}}^2}{I}$. 
\end{definition}

\begin{lemma}
Let $\mathcal{C}$ be the family of curves defined by $\mathcal{E}$. 
The restriction of $L_{\mathcal{E}, I}$ 
to $\FormalHilb{\mathcal{C}/\Lambda}{I}$ 
depends only on $\mathcal{C}$ as a formal subscheme 
of $\hat{\mathbb{A}}^2_{\Lambda}$. 
\end{lemma}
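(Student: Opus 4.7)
The plan is to show that if $\mathcal{E}' = u_\lambda \mathcal{E}$ is another family of $w$-contact equations defining the same family $\mathcal{C}$ of curves (so $u_\lambda$ is a unit in $\mathcal{O}_{\hat{\mathbb{A}}^2\times\Lambda}$), then the morphisms $L_{\mathcal{E}, I}$ and $L_{\mathcal{E}', I}$ agree on $\FormalHilb{\mathcal{C}/\Lambda}{I}$. I would work at the level of universal families: since the target $\FormalHilb{\hat{\mathbb{A}}^3}{\tilde{I}}$ pro-represents the local Hilbert functor, it suffices to verify that for every test scheme $T$ and every $T$-flat subscheme $\mathcal{Z} \subseteq \mathcal{C} \times_\Lambda T$ whose closed fiber is cut out by $I$, the two lifts $L_\mathcal{E}(\mathcal{Z})$ and $L_{\mathcal{E}'}(\mathcal{Z})$ coincide as closed subschemes of $\hat{\mathbb{A}}^3 \times \Lambda \times T$.

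The key step is to compare $f'_\lambda/g'_\lambda$ with $f_\lambda/g_\lambda$. Writing $u_\lambda(x,y) = \bar{u}_\lambda(x) + y\tilde{h}_\lambda(x,y)$ and invoking the uniqueness of the decomposition \eqref{eqn_w_contact}, one reads off $g'_\lambda = \bar{u}_\lambda g_\lambda$ and $f'_\lambda = u_\lambda f_\lambda + x^w g_\lambda \tilde{h}_\lambda$, as in the proof of Proposition \ref{prop_invariance}. A short manipulation then gives the identity
\[
\frac{f'_\lambda}{g'_\lambda} - \frac{f_\lambda}{g_\lambda} \;=\; \frac{\tilde{h}_\lambda}{\bar{u}_\lambda g_\lambda}\, E_\lambda .
\]
Because $\mathcal{Z}$ lies in $\mathcal{C} \times_\Lambda T$, its ideal $I_\mathcal{Z}$ contains $E_\lambda$, and the identity forces $f_\lambda/g_\lambda \equiv f'_\lambda/g'_\lambda \pmod{I_\mathcal{Z}}$. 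Consequently the two ideals $\langle I_\mathcal{Z},\, z - f_\lambda/g_\lambda\rangle$ and $\langle I_\mathcal{Z},\, z - f'_\lambda/g'_\lambda\rangle$ in $\mathcal{O}_{\hat{\mathbb{A}}^3\times\Lambda}\hat\otimes \mathcal{O}_T$ coincide, which is exactly the assertion that $L_\mathcal{E}(\mathcal{Z}) = L_{\mathcal{E}'}(\mathcal{Z})$.

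Applying the same identity at $\lambda = 0$ (where $E_0 \in I$) simultaneously shows that the basepoints $L_\mathcal{E}^d([I])$ and $L_{\mathcal{E}'}^d([I])$ in $\mathrm{Hilb}^d(\hat{\mathbb{A}}^3)$ agree, so both morphisms land naturally in the same $\FormalHilb{\hat{\mathbb{A}}^3}{\tilde{I}}$. I do not expect a serious obstacle: the only step with genuine content is the modular identity for $f/g$, which is a direct algebraic calculation mirroring the one in Proposition \ref{prop_invariance}(1), and the remainder is formal manipulation of universal families of local Hilbert functors.
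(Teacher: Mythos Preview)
Your proposal is correct and follows essentially the same approach as the paper: both arguments show that for a test family $\mathcal{I}$ of ideals containing $E_\lambda$, the lifted ideals differ by a multiple of $E_\lambda$ and hence coincide. The only cosmetic differences are that you treat a general unit $u_\lambda=\bar{u}_\lambda(x)+y\tilde{h}_\lambda$ in one stroke (the paper first discards the $\bar{u}_\lambda(x)$ factor, which leaves $f/g$ unchanged, and then handles $1+yh_\lambda$), and that you phrase the lift via $z-f_\lambda/g_\lambda$ rather than the equivalent generator $f_\lambda-zg_\lambda$.
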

\begin{proof}
Let $R=\mathcal{O}_{\Lambda}$, 
$\mathcal{O}_R=R[[x, y]]$, 
$\tilde{\mathcal{O}}=\mathbb{C}[[x, y, z]]$ 
and $\tilde{\mathcal{O}}_R=R[[x, y, z]]$. 
Take an object $A$ of $\mathcal{A}_R$ 
and a family $\mathcal{I}\subset \mathcal{O}_R\otimes_R A$($\cong \mathcal{O}\otimes A$) 
over $A$ 
belonging to 
$\FormalHilb{\mathcal{C}/\Lambda}{I}$. 
Then $L_{\mathcal{E}, I}$ maps the $A$-valued point $[\mathcal{I}]$ to 
the point given by the ideal 
\[
\tilde{\mathcal{I}}:=\mathcal{I}\cdot\tilde{\mathcal{O}}_R\otimes_R A+ 
(f_\lambda(x, y) - zg_\lambda(x))\cdot\tilde{\mathcal{O}}_R\otimes_R A
\subset \tilde{\mathcal{O}}_R\otimes_R A. 
\]

To prove the statement, it suffices to show that 
replacing $\mathcal{E}=\{E_\lambda(x, y)\}$ by 
$\mathcal{E}'=\{u_\lambda(x, y)E_\lambda(x, y)\}$ 
leads to the same $\tilde{\mathcal{I}}$, where $u_\lambda(x, y)=1+yh_\lambda(x, y)$ 
as in the proof of Proposition \ref{prop_invariance}. 
We have 
\begin{eqnarray*}
u_\lambda(x, y)E_{\lambda}(x, y)
& = & 
y \{f_\lambda(x, y)+h_\lambda(x, y)E_\lambda(x, y)\} 
+ x^w g_\lambda(x), 
\end{eqnarray*}
so $f_\lambda(x, y)- z g_\lambda(x)$ is replaced by 
\[
\{f_\lambda(x, y)+h_\lambda(x, y)E_\lambda(x, y)\}
- z g_\lambda(x). 
\]
The difference from $f_\lambda(x, y)- z g_\lambda(x)$ is 
\[
\{f_\lambda(x, y)+h_\lambda(x, y)E_\lambda(x, y)\} - 
f_\lambda(x, y) \\
=
 h_\lambda(x, y) E_\lambda(x, y)\in \mathcal{I}, 
\]
since $\mathcal{I}$ defines a subscheme of $\mathcal{C}$. 
Thus the resulting ideals are the same. 
\end{proof}

For points of the curve which are not on the boundary, we consider the following. 
\begin{definition}
Let $\mathcal{E}=\{E_{\lambda}\}_{\lambda\in\Lambda}$ 
be a family of equations over $\Lambda$ 
with $E_0(0, 0)=0$. 

We define $\mathcal{S}'_{\mathcal{E}}$  
to be the closed formal subscheme of $\hat{\mathbb{A}}^3_{\Lambda}$ 
given by 
$\langle z-E_\lambda(x, y)\rangle$, 
and 
\[
L_{\mathcal{E}}^{\prime d}: \mathrm{Hilb}^d(\hat{\mathbb{A}}^2_{\Lambda}/\Lambda)
\to \mathrm{Hilb}^d(\hat{\mathbb{A}}^3) 
\]
to be the composite 
\[
\mathrm{Hilb}^d(\hat{\mathbb{A}}^2_{\Lambda}/\Lambda) 
\overset{\sim}{\to} 
\mathrm{Hilb}^d(\mathcal{S}'_{\mathcal{E}}/\Lambda)
\to
\mathrm{Hilb}^d(\hat{\mathbb{A}}^3). 
\]
If $I\subset\mathcal{O}$ is an ideal of colength $d$ 
and $[\tilde{I}]=L_{\mathcal{E}}^{\prime d}([I])$, 
we write the morphism between the completions as 
\[
L'_{\mathcal{E}, I}: 
\FormalHilb{\hat{\mathbb{A}}^2_{\Lambda}/\Lambda}{I}
\to \FormalHilb{\hat{\mathbb{A}}^3}{\tilde{I}}. 
\]
\end{definition}

The following theorem gives a correspondence of 
our relative Hilbert scheme of family of curves 
and Hilbert schemes of surface singularities. 

\begin{theorem}\label{thm_corr}
Let $\Lambda$ 
be the formal spectrum of an object of $\hat{\mathcal{A}}_{\mathbb{C}}$. 
\begin{itemize}
\item
For $i=1, \dots, k$, 
let $\mathcal{E}_i=\{E_{i, \lambda}\}_{\lambda\in\Lambda}$ 
be a family of $w_i$-contact equations 
and $\mathcal{C}_i$ the family of curves defined by $\mathcal{E}_i$ 
with central fiber $C_i$. 
\item
For $j=1, \dots, l$, 
let $\mathcal{E}'_j=\{E'_{\lambda, j}\}_{\lambda\in\Lambda}$ 
be a family of equations 
and $\mathcal{C}'_j$ the family of curves defined by $\mathcal{E}'_j$ 
with central fiber $C'_j$. 
\end{itemize}
Let $I_i\subset\mathcal{O}$ (resp. $I'_j\subset\mathcal{O}$) 
be the ideal of a length $d_i$ (resp. $d'_j$) subscheme of $C_i$ (resp. $C'_j$) 
and write $[\tilde{I}_i]=L_{\mathcal{E}_i}^{d_i}([I_i])$ 
(resp. $[\tilde{I}'_j]=L_{\mathcal{E}'_j}^{\prime {d'_i}}([I'_j])$). 
We consider the morphism 
\[
L:=\prod_{i=1}^k L_{\mathcal{E}_i, I_i} \times 
\prod_{j=1}^l L'_{\mathcal{E}'_j, I'_j}: 
\FormalHilb{\hat{\mathbb{A}}^2_{\Lambda}/\Lambda}{I_1}
\times_\Lambda \dots \times_\Lambda 
\FormalHilb{\hat{\mathbb{A}}^2_{\Lambda}/\Lambda}{I'_l}
\to 
\FormalHilb{\hat{\mathbb{A}}^3}{\tilde{I}_1}
\times \dots \times 
\FormalHilb{\hat{\mathbb{A}}^3}{\tilde{I}'_l}. 
 \]
\begin{enumerate}
\item
The equality 
\[
L^{-1}\left(
\prod_{i=1}^k \FormalHilb{S_{w_i-1}}{\tilde{I}_i}
\times
\prod_{j=1}^l \FormalHilb{S'}{\tilde{I}'_j}
\right)
= 
\FormalHilb{\mathcal{C}_i/\Lambda}{I_1}
\times_\Lambda \dots \times_\Lambda 
\FormalHilb{\mathcal{C}'_j/\Lambda}{I'_l}
\]
holds as formal subschemes of 
$\FormalHilb{\hat{\mathbb{A}}^2_{\Lambda}/\Lambda}{I_1}
\times_\Lambda \dots \times_\Lambda 
\FormalHilb{\hat{\mathbb{A}}^2_{\Lambda}/\Lambda}{I'_l}$, 
where $S_n\subset \hat{\mathbb{A}}^3$ (resp. $S'\subset \hat{\mathbb{A}}^3$) 
is defined by $yz+x^{n+1}=0$ (resp. $z=0$). 
\item
Assume that $\Lambda$ is regular 
and that the map 
\[
(\Phi_{\mathcal{E}_1, I_1}, \dots, \Delta_{\mathcal{E}'_l, I'_l}): 
\Lambda \to \mathcal{O}/I_1\times\dots\times\mathcal{O}/I'_l 
\]
is surjective. 
Then $L$ is smooth 
of relative dimension $\dim \Lambda-\sum_{i=1}^k d_i - \sum_{j=1}^l d'_j$ 
at $([I_1], \dots, [I'_l])$, 
and therefore induces a smooth morphism 
\[
\FormalHilb{\mathcal{C}_1/\Lambda}{I_1}
\times_\Lambda \dots \times_\Lambda 
\FormalHilb{\mathcal{C}'_l/\Lambda}{I'_l}
\to 
\prod_{i=1}^k \FormalHilb{S_{w_i-1}}{\tilde{I}_i}
\times
\prod_{j=1}^l \FormalHilb{S'}{\tilde{I}'_j}
\]
of relative dimension $\dim \Lambda-\sum_{i=1}^k d_i - \sum_{j=1}^l d'_j$. 
\end{enumerate}
\end{theorem}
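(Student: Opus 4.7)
The argument splits into two parts: (1) is a direct functorial verification using the defining equation of the $A_n$-singularity, while (2) applies Proposition \ref{prop_hilb_and_smooth_morphism}(2) to convert each target factor into the completion of a smooth $\mathbb{A}^d$-bundle over a smooth Hilbert scheme, thereby reducing smoothness of $L$ to the surjectivity of its differential, which is exactly the stated hypothesis.

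For (1), fix $A\in\hat{\mathcal{A}}_R$ and $A$-flat ideals $\mathcal{I}_i\subset A[[x,y]]$ lifting $I_i$ (together with $\mathcal{I}'_j$ lifting $I'_j$) and the underlying map $R\to A$. Then $L$ produces the ideals $\tilde{\mathcal{I}}_i=\mathcal{I}_i A[[x,y,z]]+(f_{i,\lambda}-zg_{i,\lambda})$ and $\tilde{\mathcal{I}}'_j=\mathcal{I}'_j A[[x,y,z]]+(z-E'_{\lambda,j})$. The condition for the image to lie in $\FormalHilb{S_{w_i-1}}{\tilde{I}_i}$ is $yz+x^{w_i}\in\tilde{\mathcal{I}}_i$, and the identity
\[
g_{i,\lambda}(yz+x^{w_i})=-y(f_{i,\lambda}-zg_{i,\lambda})+E_{i,\lambda}(x,y),
\]
together with the fact that $g_{i,\lambda}$ is a unit, shows this is equivalent to $E_{i,\lambda}\in\mathcal{I}_i$, i.\,e.\ to $\mathcal{I}_i$ defining an $A$-flat subscheme of $\mathcal{C}_i$. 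Analogously, $z\in\tilde{\mathcal{I}}'_j$ iff $E'_{\lambda,j}\in\mathcal{I}'_j$. Taking the product of these conditions gives (1).

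For (2), apply Proposition \ref{prop_hilb_and_smooth_morphism}(2) with $X=\hat{\mathbb{A}}^2$ and $\tilde{X}=\hat{\mathbb{A}}^3$: since the extra generator of $\tilde{I}_i$ has unit $z$-coefficient, $\tilde{I}_i$ is a lift of $I_i$ along the projection $\hat{\mathbb{A}}^3\to\hat{\mathbb{A}}^2$, and the same holds for $\tilde{I}'_j$, so we obtain identifications $\FormalHilb{\hat{\mathbb{A}}^3}{\tilde{I}_i}\cong(\mathbb{O}^{\hat{\mathbb{A}}^2}_{I_i})^\wedge_{P_{\tilde{I}_i}}$ and similarly for $\tilde{I}'_j$. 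Each target factor is then smooth of dimension $3d_i$ (resp.\ $3d'_j$), and the source is smooth (since $\Lambda$ is regular and $\hat{\mathbb{A}}^2$ is smooth) of dimension $\dim\Lambda+\sum 2d_i+\sum 2d'_j$. Under these identifications $L$ sends $((\mathcal{I}_i)_i,(\mathcal{I}'_j)_j,\lambda)$ to the tuple of pairs $\bigl((\mathcal{I}_i,\,g_{i,\lambda}^{-1}f_{i,\lambda}\bmod\mathcal{I}_i),(\mathcal{I}'_j,\,E'_{\lambda,j}\bmod\mathcal{I}'_j)\bigr)$, and on a first-order deformation $((\eta_i),(\eta'_j),\bv)$ its differential acts as the identity on the $\bigoplus\mathrm{Hom}(I,\mathcal{O}/I)$-components while sending $\bv\in T_0\Lambda$ in the affine-bundle directions to
\[
\bigl((\Phi_{\mathcal{E}_i,I_i}(\bv))_i,\ (\Delta_{\mathcal{E}'_j,I'_j}(\bv))_j\bigr).
\]
Thus $dL$ is surjective precisely when the hypothesis holds, and since both source and target are smooth formal schemes this implies that $L$ is smooth of relative dimension $\dim\Lambda+\sum 2d_i+\sum 2d'_j-\sum 3d_i-\sum 3d'_j=\dim\Lambda-\sum d_i-\sum d'_j$. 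The induced smooth morphism onto the subvariety of (1) then follows by base change.

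The most delicate step is pinning down $dL$ along the fibers of the $\mathbb{A}^{d_i}$- and $\mathbb{A}^{d'_j}$-bundles: after formally trivializing $\mathbb{O}^{\hat{\mathbb{A}}^2}_{I_i}$ at $P_{\tilde{I}_i}$ as $\FormalHilb{\hat{\mathbb{A}}^2}{I_i}\times\hat{\mathbb{A}}^{d_i}$, the fiber coordinate of the image of $(\mathcal{I},\lambda)$ is the class $g_{i,\lambda}^{-1}f_{i,\lambda}\bmod\mathcal{I}$, whose $\lambda$-derivative at $(I_i,0)$ is exactly $\Phi_{\mathcal{E}_i,I_i}(\bv)$ by Definition \ref{def_Phi_nondegeneracy}; any $\eta_i$-dependent correction only shifts the trivialization and does not obstruct surjectivity, since the $\mathrm{Hom}$-component of $dL$ is the identity. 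The primed case is identical with $E'_{\lambda,j}$ in place of $g_{i,\lambda}^{-1}f_{i,\lambda}$, yielding $\Delta_{\mathcal{E}'_j,I'_j}(\bv)$. Once this identification is established, the conclusion is the standard criterion that a tangent-surjective morphism between smooth formal schemes is smooth, of the relative dimension given by the dimension count.
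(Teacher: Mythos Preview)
Your proof is correct and follows essentially the same approach as the paper. The only differences are cosmetic: in (1) you package the computation via the single identity $g_{i,\lambda}(yz+x^{w_i})=-y(f_{i,\lambda}-zg_{i,\lambda})+E_{i,\lambda}$ (the paper substitutes $z\equiv g_{i,\lambda}^{-1}f_{i,\lambda}$ and then invokes $\tilde{\mathcal{I}}_i\cap A[[x,y]]=\mathcal{I}_i$, which you use implicitly), and in (2) the paper avoids your discussion of $\eta_i$-dependent cross terms by observing that $L$ is a morphism \emph{over} $\prod_i\FormalHilb{\hat{\mathbb{A}}^2}{I_i}\times\prod_j\FormalHilb{\hat{\mathbb{A}}^2}{I'_j}$ between schemes smooth over that base, so only the relative tangent map $T_0\Lambda\to\prod\mathcal{O}/I$ needs to be computed---and that map is exactly $(\Phi_{\mathcal{E}_1,I_1},\dots,\Delta_{\mathcal{E}'_l,I'_l})$.
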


\begin{proof}
(1)
Let $R=\mathcal{O}_{\Lambda}$, 
$\mathcal{O}_R=R[[x, y]]$, 
$\tilde{\mathcal{O}}=\mathbb{C}[[x, y, z]]$ 
and $\tilde{\mathcal{O}}_R=R[[x, y, z]]$. 
For an object $A$ of $\mathcal{A}_R$ 
and families $\mathcal{I}_i\subset 
\mathcal{O}_R\otimes_R A\cong\mathcal{O}\otimes A$ (resp. $\mathcal{I}'_j$) 
over $A$ belonging to 
$\FormalHilb{\mathcal{C}_i/\Lambda}{I_i}$ 
(resp. $\FormalHilb{\mathcal{C}'_j/\Lambda}{I'_j}$), 
$\mathcal{I}_i$ 
is mapped to 
$\tilde{\mathcal{I}}_i:=\mathcal{I}_i\cdot\tilde{\mathcal{O}}_R\otimes_R A+ 
(f_{i, \lambda}(x, y) - zg_{i, \lambda}(x))\cdot\tilde{\mathcal{O}}_R\otimes_R A$, 
and $\mathcal{I}'_j$ to 
$\tilde{\mathcal{I}}'_j:=\mathcal{I}'_j\cdot\tilde{\mathcal{O}}_R\otimes_R A+ 
(z-E'_{j, \lambda}(x, y))\cdot\tilde{\mathcal{O}}_R\otimes_R A$. 

The ideal $\tilde{\mathcal{I}}_i$ defines an $A$-valued point of 
$\FormalHilb{S_{w_i-1}}{\tilde{I}_i}$ 
if and only if $yz+x^{w_i}\in \tilde{\mathcal{I}_i}$. 
Since $z\equiv g_{i, \lambda}(x)^{-1}f_{i, \lambda}(x, y)\mod \tilde{\mathcal{I}_i}$, 
this is equivalent to 
$yg_{i, \lambda}(x)^{-1}f_{i, \lambda}(x, y) + x^{w_i}\in \tilde{\mathcal{I}_i}$, 
or $yf_{i, \lambda}(x, y) + x^{w_i}g_{i, \lambda}(x)\in \tilde{\mathcal{I}_i}$. 
Since it is elementary to see that 
$\tilde{\mathcal{I}_i}\cap (\mathcal{O}_R\otimes_R A)=\mathcal{I}_i$, 
this holds if an only if $yf_{i, \lambda}(x, y) +x^{w_i} g_{i, \lambda}(x)\in \mathcal{I}_i$, 
i.\,e. $[\mathcal{I}_i]$ is an $A$-valued point of 
$\FormalHilb{\mathcal{C}_i/\Lambda}{I_i}$. 

Similarly, $\tilde{\mathcal{I}}'_j$ is an $A$-valued point of 
$\FormalHilb{S'}{\tilde{I}'_j}$ 
if and only if $z\in \tilde{\mathcal{I}}'_j$, 
which is equivalent to 
$E'_{j, \lambda}(x, y)\in \tilde{\mathcal{I}}'_j$, 
and to $E'_{j, \lambda}(x, y)\in \mathcal{I}'_j$. 

\medskip
(2)
The space 
$\FormalHilb{\hat{\mathbb{A}}^2_{\Lambda}/\Lambda}{I_1}
\times_\Lambda \dots \times_\Lambda 
\FormalHilb{\hat{\mathbb{A}}^2_{\Lambda}/\Lambda}{I'_l}$
is isomorphic to 
$\FormalHilb{\hat{\mathbb{A}}^2}{I_1}\times\dots\times
\FormalHilb{\hat{\mathbb{A}}^2}{I'_l}\times\Lambda$, 
and hence is smooth over 
$\FormalHilb{\hat{\mathbb{A}}^2}{I_1}\times\dots\times
\FormalHilb{\hat{\mathbb{A}}^2}{I'_l}$
of relative dimension $\dim \Lambda$. 
On the other hand, 
$
\FormalHilb{\hat{\mathbb{A}}^3}{\tilde{I}_1}
\times \dots \times 
\FormalHilb{\hat{\mathbb{A}}^3}{\tilde{I}'_l} 
$
is smooth over 
$\FormalHilb{\hat{\mathbb{A}}^2}{I_1}\times\dots\times
\FormalHilb{\hat{\mathbb{A}}^2}{I'_l}$
of relative dimension $\sum_{i=1}^k d_i + \sum_{j=1}^l d'_j$ 
by Proposition \ref{prop_hilb_and_smooth_morphism}. 

Now $L$ is a morphism over 
$\FormalHilb{\hat{\mathbb{A}}^2}{I_1}\times\dots\times
\FormalHilb{\hat{\mathbb{A}}^2}{I'_l}$
and maps the section 
$\FormalHilb{\hat{\mathbb{A}}^2}{I_1}\times\dots\times
\FormalHilb{\hat{\mathbb{A}}^2}{I'_l}\times\{0\}$ 
to 
$\FormalHilb{(\mathcal{S}_{\mathcal{E}_1})_{0}}{\tilde{I}_1}
\times\dots\times
\FormalHilb{(\mathcal{S}_{\mathcal{E}'_l})_{0}}{\tilde{I}'_l}$. 
The fiber $\{([I_1], \dots, [I'_l])\}\times \Lambda\cong\Lambda$ 
is mapped by $L$ 
in the following way: 
\[
\lambda\mapsto 
([I_1\tilde{\mathcal{O}}+(z-g_{1, \lambda}(x)^{-1}f_{1, \lambda}(x, y))\tilde{\mathcal{O}}], 
\dots, 
[I'_l\tilde{\mathcal{O}}+(z-E'_{l, \lambda}(x, y))\tilde{\mathcal{O}}]). 
\]
The tangent space to the fiber of 
$
\FormalHilb{\hat{\mathbb{A}}^3}{\tilde{I}_1}
\times \dots \times 
\FormalHilb{\hat{\mathbb{A}}^3}{\tilde{I}'_l} 
\to
\FormalHilb{\hat{\mathbb{A}}^2}{I_1}\times\dots\times
\FormalHilb{\hat{\mathbb{A}}^2}{I'_l}$ 
can be identified with 
$\mathcal{O}/I_1\times\dots\times\mathcal{O}/I'_l$, 
and then the tangent map 
$T_0\Lambda \to \mathcal{O}/I_1\times\dots\times\mathcal{O}/I'_l$ 
is nothing but 
$(\Phi_{\mathcal{E}_1, I_1}, \dots, \Delta_{\mathcal{E}'_l, I'_l})$.
Thus we have the assertion. 
\end{proof}

\section{The case of rational log Calabi-Yau surfaces}

We prove the following theorem. 

\begin{theorem}\label{thm_main}
Let $X$ be a smooth projective rational surface, 
$D\subset X$ an anticanonical curve, 
$\beta\in H^2(X, \mathbb{Z})$ a curve class of arithmetic genus $g$ 
and $\mathfrak{d}=\sum_{i=1}^k w_iP_i$ an effective divisor on $D_{\mathrm{sm}}$ 
such that $\beta|_D\sim \mathfrak{d}$. 

Then each point of $\Moduli{X}{D}{\beta}{\mathfrak{d}}$ 
is formally isomorphic to a point 
of $\prod_{i=1}^k \mathrm{Hilb}^{d_i}(S_{w_i-1})\times \mathbb{A}^{2d}$, 
where $S_n$ is a surface singularity of type $A_n$, 
$d_i\leq \delta(C, P_i)$ and $d=g-\sum_{i=1}^k d_i$. 
\end{theorem}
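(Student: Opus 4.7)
The plan is a three-stage reduction using the tools developed in Sections~2--4: from the moduli of sheaves to a relative compactified Jacobian, then to a relative Hilbert scheme via a twisted Abel map, and finally to a product of local Hilbert schemes of $A_n$-singularities via Theorem~\ref{thm_corr}. Fix a point $[\mathcal{F}] \in \Moduli{X}{D}{\beta}{\mathfrak{d}}$ supported on an integral curve $C$. By Lemma~\ref{lem_linearsystem}, the moduli space is the relative compactified Jacobian of the family $\mathcal{C}\to\Lambda_{\beta,\mathfrak{d}}$ over a base of dimension $g$. Invoking Proposition~\ref{prop_twisted_abel_map}, I would choose a length-$g$ subscheme $Z = Z_0 \sqcup \{R_1,\dots,R_{g-\mathrm{len}(Z_0)}\}$ of $C$ where $Z_0$ is supported on $\mathrm{Sing}(C)$ with $I_{Z_0}$ containing the conductor and $\mathcal{F}_P \cong (I_{Z_0})_P$ locally, and the $R_m$ are general smooth points; then a twisted Abel map gives a formal isomorphism $\FormalHilb{\mathcal{C}/\Lambda_{\beta,\mathfrak{d}}}{Z} \overset{\sim}{\to} \bar{J}(\mathcal{C}/\Lambda_{\beta,\mathfrak{d}})^\wedge_{[\mathcal{F}]}$.

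Next I would split $\mathrm{Supp}(Z)$ into the boundary singular points $P_1,\dots,P_k \in \mathrm{Supp}(\mathfrak{d})$, the interior singular points $Q_1,\dots,Q_l$ of $C$, and the generic smooth points $R_m$. Since the supports are disjoint, the formal Hilbert scheme factors as the fiber product over $\Lambda_{\beta,\mathfrak{d}}$ of the corresponding local formal Hilbert schemes. At each $P_i$ choose formal coordinates with $D=(y=0)$; the family becomes one of $w_i$-contact equations $\mathcal{E}_i$ in the sense of Definition~\ref{def_w_contact}. At each $Q_j$ take any local defining equation $\mathcal{E}'_j$. Writing $d_i = \mathrm{len}(Z|_{P_i})$ and $d'_j = \mathrm{len}(Z|_{Q_j})$, the bound $d_i \leq \delta(C,P_i)$ is immediate from the conductor condition on $I_{Z_0}$. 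At each smooth point $R_m$ the total space $\mathcal{C}$ is smooth, so the corresponding local factor is smooth over $\Lambda_{\beta,\mathfrak{d}}$ and will be absorbed into the final smooth factor.

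To apply Theorem~\ref{thm_corr}(2), one must verify the combined nondegeneracy
\[
T_0\Lambda_{\beta,\mathfrak{d}} \longrightarrow \prod_{i=1}^k \mathcal{O}/I_{Z|_{P_i}} \;\times\; \prod_{j=1}^l \mathcal{O}/I_{Z|_{Q_j}}
\]
built from the $\Phi_{\mathcal{E}_i, Z|_{P_i}}$ and $\Delta_{\mathcal{E}'_j, Z|_{Q_j}}$. Granting this, Theorem~\ref{thm_corr} produces a smooth morphism from the singular part of $\FormalHilb{\mathcal{C}/\Lambda}{Z}$ onto $\prod_i \FormalHilb{S_{w_i-1}}{\tilde{I}_i} \times \prod_j \FormalHilb{S'}{\tilde{I}'_j}$ of relative dimension $g - \sum d_i - \sum d'_j$. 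Since $S' \cong \hat{\mathbb{A}}^2$ is smooth, each $\FormalHilb{S'}{\tilde{I}'_j}$ is smooth of dimension $2d'_j$, and together with the smooth-point factors and the relative smoothness above they assemble into a single smooth affine piece $\hat{\mathbb{A}}^{2d}$ with $d = g - \sum d_i$, by the dimension count $2g = \sum 2d_i + 2d$.

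The main obstacle is verifying the surjectivity displayed above. By Lemma~\ref{lem_linearsystem}(2), $T_0 \Lambda_{\beta,\mathfrak{d}}$ is controlled by $H^0(\mathcal{O}_X(C-D))$, and the required surjectivity amounts to showing that deformations of $C$ in $|\beta|$ preserving the boundary divisor $\mathfrak{d}$ surject onto local jets modulo the ideals $I_{Z|_{P_i}}$ and $I_{Z|_{Q_j}}$, after the explicit $\Phi$-correction at the boundary. The log Calabi--Yau hypothesis $D \sim -K_X$ is exactly what provides this: Serre duality together with $h^1(\mathcal{O}_X)=0$ gives $h^1(\mathcal{O}_X(C-D)) = h^1(\mathcal{O}_X(-C)) = 0$ since $C$ is integral, and the required twisted vanishings of the form $h^1(\mathcal{O}_X(C-D)\otimes \mathcal{J}) = 0$ for sheaves $\mathcal{J}$ assembled from the $I_{Z|_{P_i}}$ and $I_{Z|_{Q_j}}$ should follow from a Cayley--Bacharach-type argument, because the Serre dual of such a twist is $\mathcal{H}om(\mathcal{J},\mathcal{O}_X(-C))$ whose global sections vanish since $C$ is integral and the supports of $\mathcal{J}$ lie in the conductor of $C$. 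I expect this cohomological surjectivity to be the principal technical ingredient and to require an auxiliary lemma, after which Proposition~\ref{prop_twisted_abel_map} combined with Theorem~\ref{thm_corr} yields the asserted formal isomorphism.
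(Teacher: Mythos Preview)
Your three-stage reduction matches the paper exactly: relative compactified Jacobian via Lemma~\ref{lem_linearsystem}, then relative Hilbert scheme via Proposition~\ref{prop_twisted_abel_map}, then product of local Hilbert schemes of $A_{w_i-1}$-singularities via Theorem~\ref{thm_corr}. Your handling of the smooth points $R_m$ and the dimension count is fine; the paper simply absorbs all non-boundary points of $\mathrm{Supp}\,Z$ into the $Q_j$'s rather than separating out the smooth ones, but your variant works the same way.

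The one genuine gap is in your sketch of the surjectivity argument. Your proposed route through vanishing of $h^1(\mathcal{O}_X(C-D)\otimes\mathcal{J})$ on the surface does not work as stated: on a surface Serre duality pairs $H^1$ with $\mathrm{Ext}^1$, not with global $\mathcal{H}om$, and $\mathrm{Ext}^1(I_{Z_0},\mathcal{O}_X(-C))$ picks up a local contribution $H^0(\mathcal{E}xt^1(I_{Z_0},\mathcal{O}_X(-C)))$ of length $\mathrm{len}(Z_0)>0$, so the $h^1$ you hope to kill is nonzero. The paper's argument (Lemma~\ref{lem_main_surjectivity}) runs on the curve $C$ instead. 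After the easy surjection $H^0(\mathcal{O}_X(C-D))\twoheadrightarrow H^0(\mathcal{O}_C(C-D))$, the cokernel of $H^0(\mathcal{O}_C(C-D))\to H^0(\mathcal{O}_C(C-D)|_Z)$ embeds in the kernel of $H^1(I_Z\cdot\mathcal{O}_C(C-D))\to H^1(\mathcal{O}_C(C-D))$, and by Serre duality on $C$ this is dual to $\mathrm{Hom}(\mathcal{O}_C,\omega_C(D-C))\to\mathrm{Hom}(I_Z,\omega_C(D-C))$. The log Calabi--Yau condition gives $\omega_C(D-C)\cong\mathcal{O}_C$, and then the condition $\mathrm{Hom}_{\mathcal{O}_C}(I_Z,\mathcal{O}_C)\cong\mathbb{C}$---which you already obtained from Proposition~\ref{prop_twisted_abel_map} but did not exploit---makes that map an isomorphism. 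The paper also carries out a separate local computation (Lemma~\ref{lem_main_localdesc}) identifying this cohomological restriction map with the abstract $(\Phi,\Delta)$-map demanded by Theorem~\ref{thm_corr}; you assert this identification implicitly, but it requires its own check.
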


To prove the theorem, 
first we look at the spaces of global sections of certain sheaves 
related to our family of curves. 
\begin{lemma}\label{lem_main_surjectivity}
Let $X$ be a smooth surface, $D\subset X$ an effective divisor 
and $C\subset X$ a proper integral curve. 
\begin{enumerate}
\item
If $X$ is projective, $h^1(\mathcal{O}_X)=0$ and $K_X+D\sim 0$, 
then the natural map $H^0(\mathcal{O}_X(C-D))\to H^0(\mathcal{O}_C(C-D))$ 
is surjective. 
\item
Assume $(K_X+D)|_C\sim 0$ and let $Z\subset C$ be a $0$-dimensional closed subscheme 
such that $\mathrm{Hom}_{\mathcal{O}_C}(I_Z, \mathcal{O}_C)\cong\mathbb{C}$. 
Then the natural map 
$H^0(\mathcal{O}_C(C-D))\to H^0(\mathcal{O}_C(C-D)|_Z)$ 
is surjective. 
\end{enumerate}
\end{lemma}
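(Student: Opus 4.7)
Both parts follow by applying a short exact sequence and a vanishing/duality argument; the bridge between them is the adjunction formula, which identifies $\mathcal{O}_C(C-D)$ with the dualizing sheaf $\omega_C$ under the given numerical hypothesis.

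For part (1), the plan is to use the short exact sequence
\[
0 \to \mathcal{O}_X(-D) \to \mathcal{O}_X(C-D) \to \mathcal{O}_C(C-D) \to 0
\]
and reduce the statement to the vanishing $H^1(\mathcal{O}_X(-D))=0$. Since $K_X+D\sim 0$, we have $\mathcal{O}_X(-D)\cong \omega_X$, and by Serre duality on the smooth projective surface $X$, $H^1(\omega_X)\cong H^1(\mathcal{O}_X)^{\vee}=0$ by hypothesis. There is essentially nothing more to do here.

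For part (2), I first invoke adjunction: since $C$ is a Cartier divisor on the smooth surface $X$, $\omega_C\cong (\omega_X\otimes \mathcal{O}_X(C))|_C$. Using $(K_X+D)|_C\sim 0$, this gives $\omega_C\cong \mathcal{O}_C(C-D)$ as line bundles, and in particular $C$ is Gorenstein. Then I consider
\[
0 \to \omega_C\otimes I_Z \to \omega_C \to \omega_C|_Z \to 0.
\]
Since $Z$ is zero-dimensional, $H^1(\omega_C|_Z)=0$, so the associated long exact sequence reads
\[
H^0(\omega_C) \to H^0(\omega_C|_Z) \to H^1(\omega_C\otimes I_Z) \to H^1(\omega_C) \to 0.
\]
By Serre duality on the integral projective Gorenstein curve $C$, with $\omega_C$ invertible,
\[
H^1(\omega_C\otimes I_Z) \cong \mathrm{Hom}_{\mathcal{O}_C}(I_Z,\mathcal{O}_C)^{\vee}\cong \mathbb{C}
\]
by the hypothesis, and $H^1(\omega_C)\cong H^0(\mathcal{O}_C)^{\vee}\cong \mathbb{C}$ since $C$ is integral and projective. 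Thus the last nontrivial map in the sequence is a surjection between one-dimensional spaces, hence an isomorphism, forcing $H^0(\omega_C)\to H^0(\omega_C|_Z)$ to be surjective.

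The one delicate point — which I would regard as the main thing to verify carefully rather than a true obstacle — is the identification $\omega_C\cong \mathcal{O}_C(C-D)$ and the fact that this forces $C$ to be Gorenstein, since both the Serre duality calculation and the expression $H^1(\omega_C\otimes I_Z)\cong \mathrm{Hom}(I_Z,\mathcal{O}_C)^\vee$ require $\omega_C$ to be a line bundle. Once adjunction is in hand, the dimension count combining the hypothesis $\mathrm{Hom}_{\mathcal{O}_C}(I_Z,\mathcal{O}_C)\cong\mathbb{C}$ with $h^0(\mathcal{O}_C)=1$ does the rest.
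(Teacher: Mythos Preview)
Your proof is correct and follows essentially the same route as the paper's: the same short exact sequences, the same adjunction identification $\omega_C\cong\mathcal{O}_C(C-D)$, and the same Serre duality computations. The only cosmetic difference in part (2) is that the paper shows the map $H^1(I_Z\cdot\omega_C)\to H^1(\omega_C)$ is an isomorphism by dualizing the map itself (it becomes $\mathrm{Hom}(\mathcal{O}_C,\mathcal{O}_C)\to\mathrm{Hom}(I_Z,\mathcal{O}_C)$ induced by $I_Z\hookrightarrow\mathcal{O}_C$), whereas you compute both sides to be one-dimensional and use surjectivity; these are equivalent.
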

\begin{proof}
(1)
This follows from the short exact sequence 
$0\to \mathcal{O}_X(-D)\to \mathcal{O}_X(C-D)\to\mathcal{O}_C(C-D)\to 0$ 
and $h^1(\mathcal{O}_X(-D))=h^1(\mathcal{O}_X(K_X+D))=0$. 

\medbreak
(2)
The short exact sequence 
\[
0\to I_Z\cdot\mathcal{O}_C(C-D) \to\mathcal{O}_C(C-D)\to 
\mathcal{O}_C(C-D)|_Z\to 0 
\]
induces the exact sequence 
\[
H^0(\mathcal{O}_C(C-D))\to 
H^0(\mathcal{O}_C(C-D)|_Z)
\to 
H^1(I_Z\cdot\mathcal{O}_C(C-D))\to 
H^1(\mathcal{O}_C(C-D)). 
\]
The last homomorphism is dual to the map 
\[
\mathrm{Hom}_{\mathcal{O}_C}(\mathcal{O}_C, \omega_C(D-C))
\to \mathrm{Hom}_{\mathcal{O}_C}(I_Z, \omega_C(D-C)) 
\]
induced by $I_Z\hookrightarrow \mathcal{O}_C$. 
Since $\omega_C(D-C)\cong \mathcal{O}_C(K_X+D)\cong \mathcal{O}_C$, 
this map is an isomorphism by the assumption, 
hence the assertion. 
\end{proof}

Next, we relate these maps 
to the maps $\Phi_{\mathcal{E}, I}$ introduced in 
Definition \ref{def_Phi_nondegeneracy}. 

Let $X$, $D$, $\beta$ and $\mathfrak{d}$ 
be as in the theorem, 
$\Lambda_{\beta, \mathfrak{d}}$ as in \eqref{eq_lambda_b_d} 
and $[C]\in\Lambda_{\beta, \mathfrak{d}}$. 
As usual, for a divisor $A$ on $X$, 
we regard $\mathcal{O}_X(A)$ as 
the subsheaf of the constant sheaf $K(X)$ of rational functions 
defined by $\mathcal{O}_X(A)(U)=\{f\in K(X): (\mathrm{div}(f)+A)|_U\geq 0\}$. 
Let $\sigma_A$ denote $1\in K(X)$ regarded as a rational section of $\mathcal{O}_X(A)$. 
If $A$ is effective, 
$\sigma_A$ is a global section defining the divisor $A$. 

Let $0\to H^0(\mathcal{O}_X(C-D))\overset{\alpha}{\to} 
H^0(\mathcal{O}_X(C))\to
H^0(\mathcal{O}_X(C)|_D)$ 
be the natural exact sequence. 
By Lemma \ref{lem_linearsystem} (1), 
$\Lambda_{\beta, \mathfrak{d}}$ is an open subset of  
$\mathbb{P}_*(\mathbb{C}\sigma_C+\mathrm{Im}\ \alpha)\subset
\mathbb{P}_*(H^0(\mathcal{O}_X(C)))$. 
The affine subspace $\sigma_C+\mathrm{Im}\ \alpha\subset
\mathbb{C}\sigma_C+\mathrm{Im}\ \alpha$ 
can be identified with an affine open neighborhood of $[C]$ 
in $\mathbb{P}_*(\mathbb{C}\sigma_C+\mathrm{Im}\ \alpha)$, 
and this gives a local isomorphism 
$([C]\in\Lambda_{\beta, \mathfrak{d}})\cong (0\in H^0(\mathcal{O}_X(C-D)))$ 
and an identification 
$T_{[C]}\Lambda_{\beta, \mathfrak{d}}\cong H^0(\mathcal{O}_X(C-D))$. 

For a point $P\in C\cap D$, let $y\in\mathcal{O}_{X, P}$ be a local equation for $D$, 
$x\in\mathcal{O}_{X, P}$ be such that $(x, y)$ is a local parameter system at $P$. 
Then we can take a local equation $E_0^P\in\mathbb{C}[[x, y]]$ for $C$ 
of the form $E_0^P=yf_0^P+x^{w_P}$, 
where $w_P=(C.D)_P$. 

Now $\rho_D:=y^{-1}\sigma_D$, $\rho_C:=(E_0^P)^{-1}\sigma_C$ 
and $\rho_{C-D}:=y\cdot(E_0^P)^{-1}\sigma_{C-D}$ 
are local generators of $\mathcal{O}_X(D)$, $\mathcal{O}_X(C)$ 
and $\mathcal{O}_X(C-D)$ at $P$, respectively.

\begin{lemma}\label{lem_main_localdesc}
\begin{enumerate}
\item
Under the local isomorphism 
$([C]\in\Lambda_{\beta, \mathfrak{d}})\cong (0\in H^0(\mathcal{O}_X(C-D)))$, 
the tautological family over $\Lambda_{\beta, \mathfrak{d}}$ is given 
locally at $P$ by the family of equations 
$\mathcal{E}^{P}:=\{E_0^P + yf_\lambda^P\}_{\lambda\in U}$, 
where $U\subseteq H^0(\mathcal{O}_X(C-D))$ is a neighborhood of $0$ 
and $f_\lambda^P\in\mathcal{O}_{X, P}$ is defined by 
$\lambda=f_\lambda^P\cdot\rho_{C-D}$. 
\item
Let $Z$ be a $0$-dimensional subscheme of $C$ supported at $P$. 
Under the identifications 
$T_{[C]}\Lambda_{\beta, \mathfrak{d}}\cong H^0(\mathcal{O}_X(C-D))$ 
and 
\[
H^0(\mathcal{O}_C(C-D)|_Z)
\overset{\sim}{\to} H^0(\mathcal{O}_Z);\ (f\cdot\rho_{C-D})|_Z\mapsto f|_Z, 
\]
the natural map 
$
H^0(\mathcal{O}_X(C-D))\to 
H^0(\mathcal{O}_C(C-D)|_Z)
$
coincides with $\Phi_{\mathcal{E}^P, Z}$. 
\end{enumerate}
\end{lemma}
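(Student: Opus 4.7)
The plan is to express the tautological family globally as the vanishing locus of a canonical section of $\mathcal{O}_X(C)$ and then translate it into the local frame at $P$; once the tautological identity $\rho_{C-D}=y\rho_C$ is in hand, both parts reduce to short algebraic verifications.

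For (1), I would first recall, from the proof of Lemma \ref{lem_linearsystem}, that the identification $(0\in H^0(\mathcal{O}_X(C-D)))\cong ([C]\in\Lambda_{\beta,\mathfrak{d}})$ sends $\lambda$ to the curve $C_\lambda$ defined by the section $\sigma_C+\alpha(\lambda)\in H^0(\mathcal{O}_X(C))$, where $\alpha$ is the natural inclusion from the exact sequence. Under the ``subsheaf of $K(X)$'' conventions, $\alpha$ is simply the identity on rational functions, and the equality $\sigma_{C-D}=\sigma_C=1$ in $K(X)$ yields
\[
\rho_{C-D}=y(E_0^P)^{-1}=y\cdot\rho_C,\qquad \sigma_C=E_0^P\cdot\rho_C
\]
locally at $P$. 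Writing $\lambda=f_\lambda^P\rho_{C-D}$ near $P$, I then compute $\sigma_C+\alpha(\lambda)=(E_0^P+yf_\lambda^P)\rho_C$, so that $C_\lambda$ is cut out locally by $E_0^P+yf_\lambda^P$, as claimed.

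For (2), I would evaluate both maps in the common frame $\rho_{C-D}|_Z$ trivializing $\mathcal{O}_C(C-D)|_Z$. Restricting $\lambda=f_\lambda^P\rho_{C-D}$ to $Z$ produces $f_\lambda^P|_Z\cdot\rho_{C-D}|_Z$, which under the stated identification becomes $f_\lambda^P|_Z\in H^0(\mathcal{O}_Z)=\mathcal{O}/I_Z$. On the $\Phi$ side, the family from (1) reads $E_\lambda=y(f_0^P+f_\lambda^P)+x^{w_P}$, already in the normal form of Definition \ref{def_Phi_nondegeneracy} with $g_\lambda\equiv 1$, hence $\Phi_{\mathcal{E}^P,Z}(\bv)=\bv(f_0^P+f_\lambda^P)+I_Z$. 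Since the assignment $\lambda\mapsto f_\lambda^P$ is $\mathbb{C}$-linear after fixing the generator $\rho_{C-D}$ and $f_0^P$ does not depend on $\lambda$, the directional derivative along $\bv\in H^0(\mathcal{O}_X(C-D))$ is $f_\bv^P$, so both maps send $\bv$ to $f_\bv^P+I_Z$.

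The chief source of friction is the bookkeeping among the three local trivializations $\rho_D,\rho_C,\rho_{C-D}$ together with keeping track that $\alpha$ acts as the identity on the ambient $K(X)$; once the relation $\rho_{C-D}=y\rho_C$ is extracted, neither part involves more than a one-line computation.
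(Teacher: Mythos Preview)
Your proof is correct and follows essentially the same approach as the paper: both extract the relation $\rho_{C-D}=y\rho_C$ from the definitions, compute $\sigma_C+\alpha(\lambda)=(E_0^P+yf_\lambda^P)\rho_C$ for part (1), and then recognize that the family is in normal form so that $\Phi_{\mathcal{E}^P,Z}$ reduces to differentiating the linear map $\lambda\mapsto f_\lambda^P$ for part (2). The only cosmetic difference is that the paper introduces an explicit basis $\{f_i\cdot\rho_{C-D}\}$ and coordinates $s_i$ to verify the match on basis vectors, whereas you argue directly from linearity of $\lambda\mapsto f_\lambda^P$; the content is identical.
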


\begin{proof}
(1)
For a section $\lambda\in H^0(\mathcal{O}_X(C-D))$ 
which is locally written as $f_\lambda^P\cdot \rho_{C-D}$, 
its image under the identification 
$H^0(\mathcal{O}_X(C-D))\overset{\sim}{\to} \sigma_C+\mathrm{Im}\ \alpha$ 
is 
\[
\sigma_C+f_\lambda^P\cdot \rho_{C-D}=\sigma_C+f_\lambda^P\cdot y\cdot(E_0^P)^{-1}\sigma_{C-D}
= 1+yf_\lambda^P\cdot(E_0^P)^{-1}=(E_0^P+yf_\lambda^P)\rho_C, 
\]
hence the assertion. 

\medbreak
(2)
Let us take a basis $\{f_i\cdot \rho_{C-D}\}_{i=1}^g$ of $H^0(\mathcal{O}_X(C-D))$ 
and introduce a coordinate system $(s_1, \dots, s_g)$ 
to write an element of $H^0(\mathcal{O}_X(C-D))$ 
as $f_{s_1, \dots, s_g}^P=(\sum_{i=1}^g s_if_i)\cdot \rho_{C-D}$. 
We can also think of $(s_1, \dots, s_g)$ as coordinates on 
$\Lambda_{\beta, \mathfrak{d}}$ at $[C]$, 
and under the identification 
$H^0(\mathcal{O}_X(C-D))\overset{\sim}{\to}T_{[C]}\Lambda_{\beta, \mathfrak{d}}$, 
the element $f_i\cdot \rho_{C-D}$ 
corresponds to the tangent vector $\partial/\partial s_i$. 
As above, we have
\[
\sigma_C + f_{s_1, \dots, s_g}^P\cdot \rho_{C-D}
= (E_0^P+yf_{s_1, \dots, s_g}^P)\rho_C
= \left(y\left(f_0^P+\sum_{i=1}^g s_if_i\right)+x^{w_P}\right)\rho_C, 
\]
so $\Phi_{\mathcal{E}^P, Z}$ maps $\partial/\partial s_i$ to $f_i|_Z$. 
This coincides with 
the image of $f_i\cdot \rho_{C-D}$ 
by the natural map 
$
H^0(\mathcal{O}_X(C-D))\to 
H^0(\mathcal{O}_C(C-D)|_Z). 
$ 
\end{proof}

\begin{proof}[Proof of Theorem \ref{thm_main}]
Let $\mathcal{F}$ be a sheaf on $X$ 
represented by a point of $\Moduli{X}{D}{\beta}{\mathfrak{d}}$. 
Let $C$ be the support of $\mathcal{F}$. 
Then, by Proposition \ref{prop_twisted_abel_map}, 
there exists a $0$-dimensional subscheme $Z$ of $C$ 
such that $([\mathcal{F}]\in\Moduli{X}{D}{\beta}{\mathfrak{d}})$ is 
formally isomorphic to 
$([Z]\in \mathrm{Hilb}^g(\mathcal{C}_{\beta, \mathfrak{d}}/\Lambda_{\beta, \mathfrak{d}}))$ 
and that $\mathrm{Hom}_{\mathcal{O}_C}(I_Z, \mathcal{O}_C)\cong \mathbb{C}$. 

Let $\{P_1, \dots, P_k\}=\mathrm{Supp}\ Z\cap D$ 
and  $\{Q_1, \dots, Q_l\}=\mathrm{Supp}\ Z\setminus D$. 
By taking local coordinates and choosing local equations for $D$ and $C$ 
at each of these points, 
we have families of equations $\mathcal{E}^{P_i}$ and $\mathcal{E}^{Q_j}$, 
and a map 
\[
(\Phi_{\mathcal{E}^{P_1}, Z_{P_1}}, \dots, \Delta_{\mathcal{E}^{Q_l}, Z_{Q_l}})
:
T_{[C]}\Lambda_{\beta, \mathfrak{d}}
\to
\prod_{i=1}^k \mathcal{O}_{Z_{P_i}}
\times
\prod_{i=1}^l \mathcal{O}_{Z_{Q_i}}, 
\]
where $Z_P$ denotes the connected component of $Z$ supported at $P$. 
By Lemma \ref{lem_main_localdesc}, 
this can be identified with 
the natural map $H^0(\mathcal{O}_X(C-D))\to H^0(\mathcal{O}_C(C-D)|_Z)$, 
and it is surjective by Lemma \ref{lem_main_surjectivity}. 

By Theorem \ref{thm_corr}, 
we see that 
$[Z]\in \mathrm{Hilb}^g(\mathcal{C}_{\beta, \mathfrak{d}}/\Lambda_{\beta, \mathfrak{d}})$ 
is formally isomorphic to 
a point of $\prod_{i=1}^k \mathrm{Hilb}^{d_i} S_{w_i-1}\times \mathbb{A}^{2d}$, 
where $d_i$ is the length of $Z$ at $P_i$ and $d=g-\sum_{i=1}^k d_i$. 
Since $Z$ can be chosen so that $Z_{P_i}$ is contained 
in the conductor of $C$ by Proposition \ref{prop_twisted_abel_map}, 
we may assume that $d_i\leq \delta(C, P_i)$. 
\end{proof}

\section*{Acknowledgements}
The author would like to thank Akira Ishii and Daisuke Matsushita 
for answering questions on symplectic singularities and Hilbert schemes. 
This work was supported by JSPS KAKENHI Grant Number JP22K03229.

\end{document}